\documentclass[12]{amsart}
\usepackage[utf8]{inputenc}   
\usepackage{color}
\usepackage{dsfont}
\usepackage{nicefrac}
\usepackage{url}
\usepackage{hyperref}
\usepackage{mathtools}
\usepackage{anysize}
\usepackage{geometry}
\marginsize{2.5cm}{2.5cm}{2.5cm}{2.5cm}
\numberwithin{equation}{section}     
\newtheorem{theorem}{Theorem}[section]
\newtheorem{lemma}{Lemma}[section]
\newtheorem{corollary}{Corollary}[section]
\newtheorem{remark}{Remark}[section]
\newcommand{\<}{\langle}
\renewcommand{\>}{\rangle}
\newcommand{\ud}{\mathrm{d}}
\begin{document}
\title{Limit behavior of the invariant measure for Langevin dynamics}
\author{Gerardo~Barrera}
\address{University of Helsinki, Department of Mathematical and Statistical Sciences. PL 68, Pietari Kalmin katu 5.
Postal Code: 00560. Helsinki, Finland.}
\email{gerardo.barreravargas@helsinki.fi}
\keywords{Coupling; Gaussian distribution; Invariant distribution; Langevin dynamics; Ornstein--Uhlenbeck process; Perturbations of dynamical systems; Wasserstein distance}
\subjclass[2000]{60H10; 34D10; 37M25; 60F05; 49Q22}
\begin{abstract}
In this manuscript, we consider the Langevin dynamics on $\mathbb{R}^d$ with an overdamped vector field and driven by multiplicative Brownian noise  of small amplitude $\sqrt{\epsilon}$, $\epsilon>0$. Under suitable assumptions on the vector field and the diffusion coefficient, it is well-known that it possesses a unique invariant probability measure $\mu^{\epsilon}$. As $\epsilon$ tends to zero, we prove that the probability measure
$\epsilon^{\nicefrac{d}{2}} \mu^{\epsilon}(\sqrt{\epsilon}\ud x)$ converges in the $p$-Wasserstein  distance for $p\in [1,2]$ to a Gaussian measure with zero-mean vector and non-degenerate covariance matrix which solves a Lyapunov matrix equation.
Moreover, the error term is estimated.
We emphasize that generically no explicit formula for $\mu^{\epsilon}$ can be found. 
\end{abstract}
\maketitle
\markboth{Invariant probability measure}{Gaussian approximation}

\section{\textbf{Introduction}}
\subsection{\textbf{The overdamped Langevin dynamics}}
Random dynamical systems  arise in the modeling of a (realistic) physical system
subject to noise perturbations from its surrounding environments or
from intrinsic uncertainties associated with the system. 
The Langevin dynamics was introduced by P. Langevin in 1908 in his celebrated article 
{\it Sur la th\'eorie du mouvement brownien,} 
C. R. Acad. Sci. Paris 146, pp.
530--533.
It is perhaps one of the most 
popular models in molecular systems. For details about the history of the Langevin equation, see~\cite{POPI}.
For a phenomenological treatment, we recommend the 
monography~\cite{COK}.

In the last decades, there have been many applications
of Markov chain Monte Carlo methods to complex  systems in
 Computer Science and Statistical Physics.
Since  
sampling 
high-dimensional distributions is typically a difficult task,
the use of stochastic equations  for sampling has become important in many applications such as artificial intelligence and  Bayesian algorithms.
Stochastic algorithms based on Langevin equations have been proposed to simulate and improve the rate of convergence  to limiting distributions. 
For further details we refer 
to~\cite{Dalalyan2017}, \cite{Duncan2017},
\cite{Eberle2019}, \cite{EberleTAMS2019},
 \cite{Hwang193}, \cite{Levi2013}, \cite{Wu2014} and the references therein.

Differential equations subject to small noise perturbations are one of the classic directions of modern mathematical physics.
Let $\epsilon\in (0,1]$ be a parameter that measures the perturbation strength
and let $(B_t)_{t \geq 0}$ be a standard Brownian motion on $\mathbb{R}^d$. 
For any (deterministic) $x\in \mathbb{R}^d$ we consider the unique strong solution $(X^{\epsilon}_t(x))_{t \geq 0}$ of the following stochastic differential equation (SDE for short) on $\mathbb{R}^d$
\begin{equation}\label{over}
\left\{
\begin{array}{r@{\;=\;}l}
\ud X^{\epsilon}_t(x)&-F(X^{\epsilon}_t(x))\ud t +
\sqrt{\epsilon}\sigma(X^\epsilon_t(x))\ud B_t\quad  
\textrm{  for any }\quad t\geq 0,\\
X^{\epsilon}_0(x) & x,
\end{array}
\right.
\end{equation}
where the vector field $F\in\mathcal{C}^2(\mathbb{R}^d,\mathbb{R}^d)$ and the diffusion coefficient  $\sigma\in
\mathcal{C}^2(\mathbb{R}^d,\mathbb{R}^{d\times d})$ satisfy the following assumptions.
We assume that $0_d$ is a fixed point for $F$, i.e., $F(0_d)=0_d$ and the
following hypotheses for $F$.

\noindent
\textbf{Bakry--\'Emery condition:}
there exists a positive constant $\delta$  such that
\begin{equation}\label{H}
\tag{{\bf{A}}}
\<F(x_1)-F(x_2),x_1-x_2\>\geq \delta \|x_1-x_2\|^2\quad \textrm{ for any }\quad x_1,x_2\in \mathbb{R}^d,
\end{equation}
where
$\<\cdot,\cdot\>$ denotes the standard inner product on $\mathbb{R}^d$ and
$\|\cdot\|$ denotes the standard Euclidean  norm on $\mathbb{R}^d$. 

\noindent
\textbf{Exponential growth condition:}
there exist positive constants $c_0$ and $c_1$ satisfying
\begin{equation}
\tag{\textbf{B}}
\label{C3}
\|D^2F(x)\| \leq c_0e^{c_1\|x\|^2}\quad \textrm{ for any }\quad x \in \mathbb{R}^d,
\end{equation}
where
$D^2 F$ denotes the second order derivative of $F$.

For the diffusion coefficient 
$\sigma$ we assume the following standard hypotheses.

\noindent
\textbf{Lipschitz continuity:} there exists a positive constant $\ell$ such that 
\begin{equation}\label{eq:lipt}
\tag{{\bf{C}}}
\|\sigma(x)-\sigma(x_0)\|_{\mathrm{F}}\leq \ell \|x-x_0\| \quad\textrm{ for all }\quad x, x_0\in \mathbb{R}^d,
\end{equation}
where $\|\cdot\|_{\mathrm{F}}$ denotes the Frobenius norm.

\noindent
\textbf{Ellipticity:} 
there is a positive constant $\kappa$ such
that
\begin{equation}\label{eq:elip}
\tag{{\bf{D}}}
  \<\sigma(x_0) \sigma^*(x_0)x, x\>\geq \kappa\|x\|^2
\quad\textrm{ for all }\quad x, x_0\in \mathbb{R}^d,
\end{equation}
where $*$ denotes the transpose operator.

Hypotheses~\eqref{H} and~\eqref{eq:lipt} imply the monotone condition~(3.14) given in Theorem~3.5, p.~58 
in~\cite{Mao2008}, and hence the existence and uniqueness of the unique strong solution of~\eqref{over}.
Along this manuscript, let $(\Omega,\mathcal{F},\mathbb{P})$ be a  complete probability space 
where~$\eqref{over}$ is defined and denote by $\mathbb{E}$ the expectation with respect to $\mathbb{P}$.

\subsection{\textbf{Invariant distribution}}
Existence of invariant measures for stochastic processes are an important feature in probability
theory and mathematical physics; and typically they are not so easy to describe explicitly.
By Theorem~3.3.4, p.~91 in~\cite{KUL}, it is not hard to verify that  Hypotheses~\eqref{H}, \eqref{eq:lipt} 
and~\eqref{eq:elip} yield the existence and uniqueness of an invariant (absolutely continuous with respect to the Lebesgue measure on $\mathbb{R}^d$) probability measure $\mu^{\epsilon}$ for the stochastic dynamics~\eqref{over}. 
If in addition, 
$F(x)=\nabla V(x) + b(x)$ and $\sigma(x)=I_d$ for any $x\in \mathbb{R}^d$,
where $I_d$ is the identity matrix of dimension $d\times d$,
$V:\mathbb{R}^d\rightarrow \mathbb{R}$ is a scalar function and $b:\mathbb{R}^d\rightarrow \mathbb{R}^d$
is a vector field
which satisfies the divergence-free condition
\begin{equation}\label{eq:dvfree}
\sum_{j=1}^d \frac{\partial}{\partial x_j}\left(b(x)\exp(-(\nicefrac{2}{\epsilon})V(x))\right)=0\quad \textrm{ for any }\quad x=(x_1,\ldots,x_d)\in \mathbb{R}^d,
\end{equation}
one can verify that
$\exp(-(\nicefrac{2}{\epsilon})V(x))\ud x$
is a stationary measure for the random dynamics~\eqref{over}. However,  it might not be a probability measure. 
Under some appropriate assumptions on $V$ for $\|x\|\gg 1$,
the unique invariant probability measure $\mu^{\epsilon}$ 
of~\eqref{over} is of the Gibbs type
\begin{equation}\label{eq:gibbs}
\mu^{\epsilon}(\ud x) = 
\frac{\exp(-(\nicefrac{2}{\epsilon})V(x))}{\mathcal{Z}^{\epsilon}}\ud x,
\end{equation}
where $\mathcal{Z}^\epsilon$ is
the so-called partition function (normalizing constant).
See for instance Chapter 2, Convergence of the Langevin process, p.~21-23 in~\cite{VI}.
Using the Laplace Method (Saddle-point Method), asymptotics as 
$\epsilon\to 0^+$ for the density of $\mu^{\epsilon}$
can be carried out, see for instance~\cite{AHWANG} 
and~\cite{Hwang1980}.

If we drop the free-divergence condition~\eqref{eq:dvfree} and replace it by the 
transversal condition 
$\<\nabla V(x),b(x)\>=0$ for all $x\in \mathbb{R}^d$, 
in~\cite{Sheu1986} for additive noise
 it is shown a beautiful expansion on $\epsilon$ for the density of  $\mu^{\epsilon}$.
However, this expansion requires smoothness of the so-called  Freidlin--Wentzell quasipotential.
The latter is a nontrivial mathematical problem since it is expressed by a variational principle.
Using calculus of variations, in~\cite{DayDarden1985} it is shown various results about the smoothness of the quasipotential under the assumptions of  smoothness, boundedness and ellipticity of the coefficients 
of~\eqref{over}. 
In Section~5 of~\cite{Day1987} it is proved that the asymptotic expansion given in~\cite{Sheu1986} remains valid  in any open set in which the quasipotential is $\mathcal{C}^2$.
For additive noise, and bounded and dissipative vector field $F$, in~\cite{Mikami1988},
by way of Watanabe's theory and Malliavin calculus, an asymptotic expansion of $\mu^\epsilon$ has been proved.
Later, in~\cite{Mikami} it is shown that $\mu^\epsilon$ can be expanded in Wentzel--Kramers--Brillouin (W.K.B.) type, as $\epsilon \to 0^+$, in the set in which the quasipotential is of $\mathcal{C}^\infty$-class and each coefficient which appears in the expansion is of $\mathcal{C}^\infty$-class. More recently, in~\cite{Biswas2009}, using control theoretic methods, it is proved that $\mu^\epsilon(\ud x)\approx \exp(-\nicefrac{V_*(x)}{\epsilon})$, $\epsilon\ll 1$, where  $V_*$ is characterized as the optimal cost of a deterministic control problem. Nevertheless, the control problem is not easy to solve explicitly.

In~\eqref{over} we consider  multiplicative noise, and no transverse condition on the vector field $F$ is assumed. Moreover, we do not
need that the Gibbs measure~\eqref{eq:gibbs} remains stationary, and no smoothness on
$\mu^\epsilon$ and the Freidlin--Wentzell quasipotential are needed.
We remark that \textit{generically} it is not possible to compute an explicit formula for $\mu^{\epsilon}$. 

\subsection{\textbf{Informal result}}
Our goal is to prove that the probability  $\epsilon^{\nicefrac{d}{2}}
\mu^{\epsilon}(\sqrt{\epsilon} \ud x)$ has a Gaussian shape in 
the small noise limit. To be more precise, under 
Hypotheses~\eqref{H}, \eqref{C3}, \eqref{eq:lipt} 
and~\eqref{eq:elip},
it follows that the probability measure 
\begin{equation}\label{eq:scala}
\epsilon^{\nicefrac{d}{2}}
\mu^{\epsilon}(\sqrt{\epsilon} \ud x)
\end{equation}
converges in the $p$-Wasserstein ($p\in [1,2]$) to a Gaussian $\mathcal{N}$ distribution with zero-mean vector and covariance matrix given by the unique solution $\mathbb{X}$ of the Lyapunov matrix equation
\begin{equation}\label{eq:mlyaeq}
DF(0_d)\mathbb{X}+\mathbb{X}(DF(0_d))^*=\sigma(0_d)(\sigma(0_d))^*.
\end{equation}
Generically, it is hard to find an explicit formula for the solution of  
the~\eqref{eq:mlyaeq}. Nevertheless, it can be estimated via numerical algorithms, see for instance~\cite{Benner}, \cite{Hodel} and the references therein.
More precisely, it is shown an asymptotic expansion (in the Wasserstein distance) of $\mu^\epsilon$ as follows
\begin{equation}\label{eq:cuantif}
\frac{\mathcal{J}^{\epsilon}}{\sqrt{\epsilon}} =\mathcal{N}+\mathcal{O}(\sqrt{\epsilon})\quad \textrm{ for }\quad \epsilon \to 0^+,
\end{equation}
where $\mathcal{J}^{\epsilon}$ denotes a random variable with law $\mu^{\epsilon}$.

We anticipate that the proof of~\eqref{eq:cuantif} does not rely on explicit computations of the distribution $\mu^\epsilon$. It is based
 on the linearization of the nonlinear dynamics around the stationary
point $0_d$. It is not hard to see that the resulting linear process has the target Gaussian as invariant distribution. It is then necessary to control the
difference between this linear process and the nonlinear dynamics. This is done using
the so-called synchronous coupling techniques with the help 
of Hypotheses~\eqref{H}, \eqref{C3} and~\eqref{eq:lipt}.
The proof of~\eqref{eq:cuantif} is \textit{purely dynamic} and 
it does not require techniques as
Malliavin calculus, large deviation theory for SDEs as 
in~\cite{Freidlin}, smoothness of the quasipotential, smoothness of the density $\mu^\epsilon$,
analysis of the infinitesimal generator or the  W.K.B. expansion.

Quantitative bounds on the rate of convergence of Markov processes to their limiting distribution
are an important and widely studied topic, particularly in the context of Markov chains, see for 
instance~\cite{Durmus}, \cite{Gareth}, \cite{Madras} and the references therein.
We  \textit{quantify} in the Wasserstein distance  the implicit error term given in~\eqref{eq:cuantif}.
We point out that
the critical regime analyzed in Section~5.1 
of~\cite{Arapostathis} implies for additive noise the total variation convergence of~\eqref{eq:scala} to a Gaussian distribution. However, it seems hard to obtain bounds for the total variation error term, even under our assumptions on $F$ and $\sigma$.

\subsection{\textbf{Wasserstein distance}}
Let $\mathcal{P}$ be the set of probability measures in the measurable space $(\mathbb{R}^d,\mathcal{B}(\mathbb{R}^d))$, where $\mathcal{B}(\mathbb{R}^d)$ denotes the Borel $\sigma$-algebra of $\mathbb{R}^d$.
For $p\geq 1$ we define
\[
\mathcal{P}_p:=\left\{\mu\in \mathcal{P}: \int_{\mathbb{R}^d}\|x\|^p\mu(\ud x)<\infty \right\},
\]
the space of probability measures with finite $p$-moment. For any $\mu,\nu\in \mathcal{P}$ we say that a probability measure ${\pi}_*$ in the measurable space
$(\mathbb{R}^d \times \mathbb{R}^d,\mathcal{B}(\mathbb{R}^d\times \mathbb{R}^d))$ is a {\em coupling} between $\mu$ and $\nu$ if the marginals of ${\pi}_*$ are $\mu$ and $\nu$, that is, 
for any $B\in \mathcal{B}{(\mathbb{R}^d)}$ it follows that
$
{\pi}_*(B \times \mathbb{R}^d) = \mu(B)$ 
and  ${\pi}_*(\mathbb{R}^d \times B) = \nu(B)$.
Let $\Pi(\mu,\nu)$ be the set of all coupling between $\mu$ and $\nu$.  
For any $\mu,\nu\in \mathcal{P}_p$, the Wasserstein distance of order $p$ between $\mu$ and $\nu$, $\mathcal{W}_p(\mu,\nu)$, is defined by 
\[
\mathcal{W}_p(\mu,\nu):=\inf\limits
\left\{\left(\int_{\mathbb{R}^d \times \mathbb{R}^d}\|x-y\|^p {\pi}_*(\ud x, \ud y)\right)^{\nicefrac{1}{p}}:
{\pi}_* \in \Pi(\mu,\nu)
\right\}.
\]
Let $X$ and $Y$ be two random vectors on $\mathbb{R}^d$ defined on the probability space $(\Omega,\mathcal{F},\mathbb{P})$ with finite $p$-moment. The Wasserstein distance of order $p$ between $X$ and $Y$, $\mathfrak{W}_p(X,Y)$, is defined by
$\mathfrak{W}_p(X,Y):=\mathcal{W}_p(\mathbb{P}_X,\mathbb{P}_Y)$,
where $\mathbb{P}_X$ and $\mathbb{P}_Y$ are  the push-forward probability measures
$\mathbb{P}_X(B):=\mathbb{P}(X\in B)$
and
$\mathbb{P}_Y(B):=\mathbb{P}(Y\in B)$
for any $B\in\mathcal{B}(\mathbb{R}^d)$.
For short, we write 
$\mathcal{W}_p(X,Y)$ in place of $\mathfrak{W}_p(X,Y)$.
A remarkable property that we use along this manuscript is the following scaling property
\begin{equation}\label{eq:scaling}
\mathcal{W}_p(\mathfrak{c}X,\mathfrak{c}Y)=|\mathfrak{c}|\mathcal{W}_p(X,Y)
\quad \textrm{ for any }\quad \mathfrak{c}\in \mathbb{R}.
\end{equation}
The Wasserstein distance metrizes the weak convergence in the space of probabilities with finite $p$-moment. 
It is a fundamental concept in optimal transport theory, probability theory and partial differential equations.
The Wasserstein distance is a natural way  to compare the law  of two random variables  $X$ and $Y$ (even for degenerate cases), where one variable is derived from the other by a small perturbation.
For further details and properties of the Wassertein distance, we refer to the monographies~\cite{PZ} and~\cite{VI}.

\subsection{\textbf{Results}}
We denote by  $\mathcal{N}{\left(v,\Xi \right)}$ the Gaussian distribution in $\mathbb{R}^d$ with vector mean $v$ and positive definite covariance matrix $\Xi$. Let $I_d$ be the identity $d\times d$-matrix. Given a matrix $A\in \mathbb{R}^{d\times d}$, denote by $A^*$ the transpose matrix of $A$ and denote by $\mathrm{Tr}(A)$ the trace of $A$. 

The main result of this manuscript is the following.
\begin{theorem}[Gaussian $\mathcal{W}_2$-approximation of the invariant measure $\mu^\epsilon$]\label{main}
Assume Hypotheses~\eqref{H}, \eqref{C3}, \eqref{eq:lipt} 
and~\eqref{eq:elip} are valid.
Let $\mathcal{J}^{\epsilon}$ be a random vector on $\mathbb{R}^d$ with distribution $\mu^{\epsilon}$. Then there exists a positive constant $K:=K(\delta,\ell,d,c_0,\sigma(0_d))$ 
such that for any $\epsilon\in (0,\epsilon_*)$ with
\[
\epsilon_*=\min\left\{\frac{\delta}{8c_1\|\sigma(0_d)(\sigma(0_d))^*\|_{\mathrm{F}}\cdot d^2},\frac{ \delta}{2\ell^2}\right\}
\]
 it follows that
\begin{equation}\label{eq:resultado}
\mathcal{W}_2\left(\frac{\mathcal{J}^{\epsilon}}{\sqrt{\epsilon}},\mathcal{N}\right)\leq K\sqrt{\epsilon},
\end{equation}
where $\mathcal{N}$ denotes 
the Gaussian distribution on $\mathbb{R}^d$ with zero-mean vector and covariance matrix $\Sigma$ which is the unique solution of the Lyapunov matrix equation
\begin{equation}\label{eq:mleq}
DF(0_d)\Sigma+\Sigma(DF(0_d))^*=\sigma(0_d)(\sigma(0_d))^*.
\end{equation}
\end{theorem}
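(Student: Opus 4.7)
The plan is to linearize \eqref{over} around the equilibrium $0_d$, couple the nonlinear and linear dynamics synchronously, and control the deviation in $L^2$ through It\^o's formula together with the Bakry--\'Emery condition \eqref{H}. Introduce the Ornstein--Uhlenbeck process
\[
\ud Y^\epsilon_t = -DF(0_d)\,Y^\epsilon_t\,\ud t + \sqrt{\epsilon}\,\sigma(0_d)\,\ud B_t,\qquad Y^\epsilon_0 = 0_d,
\]
driven by the same Brownian motion as \eqref{over}. Substituting $x_1=tv$, $x_2=0_d$ in \eqref{H} and letting $t\downarrow 0$ forces $\langle DF(0_d)v,v\rangle\geq\delta\|v\|^2$, so $DF(0_d)$ is positive definite; together with \eqref{eq:elip}, the Lyapunov equation \eqref{eq:mleq} admits a unique positive-definite solution $\Sigma$, and $Y^\epsilon_t$ converges exponentially in $\mathcal{W}_2$ to $\mathcal{N}(0_d,\epsilon\Sigma)$, whose image under $x\mapsto x/\sqrt{\epsilon}$ is precisely $\mathcal{N}$ by \eqref{eq:scaling}.

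Next, start the synchronous coupling from $X^\epsilon_0=0_d$ and split the drift as $F(X^\epsilon_t)-DF(0_d)Y^\epsilon_t = \bigl(F(X^\epsilon_t)-F(Y^\epsilon_t)\bigr)+R(Y^\epsilon_t)$, where $R(y):=F(y)-DF(0_d)y$ is the Taylor remainder. It\^o's formula applied to $\|X^\epsilon_t-Y^\epsilon_t\|^2$, Hypothesis \eqref{H} on the first summand, Young's inequality on the cross term, and the Lipschitz bound \eqref{eq:lipt} on the It\^o correction $\epsilon\|\sigma(X^\epsilon_t)-\sigma(0_d)\|_{\mathrm{F}}^2$ yield
\[
\frac{\ud}{\ud t}\mathbb{E}\|X^\epsilon_t-Y^\epsilon_t\|^2\leq -\delta\,\mathbb{E}\|X^\epsilon_t-Y^\epsilon_t\|^2+\delta^{-1}\mathbb{E}\|R(Y^\epsilon_t)\|^2+\epsilon\ell^2\,\mathbb{E}\|X^\epsilon_t\|^2.
\]

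It then remains to establish two uniform-in-$t$ moment bounds: $\mathbb{E}\|X^\epsilon_t\|^2=O(\epsilon)$ and $\mathbb{E}\|R(Y^\epsilon_t)\|^2=O(\epsilon^2)$. The first follows from a standard Lyapunov argument on $\|X^\epsilon_t\|^2$ using \eqref{H} with $x_2=0_d$ and the bound $\|\sigma(x)\|_{\mathrm{F}}^2\leq 2\|\sigma(0_d)\|_{\mathrm{F}}^2+2\ell^2\|x\|^2$ from \eqref{eq:lipt}, provided $\epsilon\leq \delta/\ell^2$. For the second, $F\in\mathcal{C}^2$ and $F(0_d)=0_d$ imply $\|R(y)\|\leq C\|y\|^2$ locally at $0_d$; for large $\|y\|$ I bound $\|R(y)\|$ by $c_0e^{c_1\|y\|^2}+\|DF(0_d)\|\,\|y\|$ via \eqref{C3}. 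Splitting $\mathbb{E}\|R(Y^\epsilon_t)\|^2$ over $\{\|Y^\epsilon_t\|\leq 1\}$ and its complement, and writing $Y^\epsilon_t$ as a centered Gaussian whose covariance is dominated by $\epsilon\Sigma$, reduces the task to bounding $\mathbb{E}[\exp(2c_1\|Y^\epsilon_t\|^2)]$ uniformly; this holds precisely under $\epsilon\leq \delta/(8c_1\|\sigma(0_d)\sigma^*(0_d)\|_{\mathrm{F}}d^2)$, once $\|\Sigma\|$ is estimated from \eqref{eq:mleq} in terms of $\sigma(0_d)\sigma^*(0_d)$ and $\delta$.

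Gronwall then delivers $\mathbb{E}\|X^\epsilon_t-Y^\epsilon_t\|^2\leq K^2\epsilon^2$ for every $t\geq 0$. Letting $t\to\infty$, the laws of $X^\epsilon_t$ and $Y^\epsilon_t$ converge in $\mathcal{W}_2$ to $\mu^\epsilon$ and $\mathcal{N}(0_d,\epsilon\Sigma)$ respectively, so the triangle inequality gives $\mathcal{W}_2(\mu^\epsilon,\mathcal{N}(0_d,\epsilon\Sigma))\leq K\epsilon$, and \eqref{eq:scaling} with $\mathfrak{c}=1/\sqrt{\epsilon}$ produces \eqref{eq:resultado}. The main technical obstacle is the remainder estimate: under \eqref{C3} alone, $F$ may grow as fast as $e^{c_1\|y\|^2}$, so $R$ need not be globally quadratic, and the whole argument hinges on the Gaussian tail of the linearized process being tight enough that $2c_1$ stays strictly below the critical level of its squared-exponential moment---precisely what the $c_1$-dependent factor in $\epsilon_*$ encodes.
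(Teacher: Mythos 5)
Your plan is essentially the paper's: linearize at $0_d$, couple $X^\epsilon$ and the Ornstein--Uhlenbeck process $Y^\epsilon$ synchronously, control $\mathbb{E}\|X^\epsilon_t-Y^\epsilon_t\|^2$ via It\^o, the Bakry--\'Emery condition, Young's inequality and Gr\"onwall, and finally pass from marginals at time $t$ to the two invariant laws by a triangle inequality; the paper's Lemmas~\ref{lemmaA}--\ref{lemmaD} are precisely the ingredients you are invoking, and your "let $t\to\infty$" step is the same as the paper's choice of a finite $t_\epsilon$ (both rely on the synchronous-coupling contraction to show $\mathcal{W}_2(X^\epsilon_t(0_d),\mu^\epsilon)\to 0$, which you should state explicitly since it uses the second-moment bound on $\mu^\epsilon$).

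There is, however, one point where your sketch would not quite reproduce the claimed constant dependence $K=K(\delta,\ell,d,c_0,\sigma(0_d))$. Your remainder estimate splits over $\{\|Y^\epsilon_t\|\le 1\}$ and its complement, and on the inner region you invoke a local Taylor bound $\|R(y)\|\le C\|y\|^2$ with $C=\tfrac12\sup_{\|z\|\le 1}\|D^2F(z)\|$. That supremum is not controlled by $(\delta,\ell,d,c_0,\sigma(0_d))$; if you bound it via the exponential growth condition applied to $D^2F$, you pick up a factor $c_0e^{c_1}$ and hence an unwanted $c_1$-dependence in $K$. The paper avoids this by applying the mean value theorem twice to write $\|R(Y^\epsilon_s)\|\le\big(\int_0^1\!\int_0^1\|D^2F(\theta_1\theta_2 Y^\epsilon_s)\|\,\ud\theta_1\,\ud\theta_2\big)\|Y^\epsilon_s\|^2$ \emph{globally}, and then uses the rescaling $Y^\epsilon_s=\sqrt{\epsilon}\,Y_s$ so that the exponential $c_0e^{c_1\theta_1^2\theta_2^2\epsilon\|Y_s\|^2}$ is evaluated at $\epsilon$-scaled arguments; the factor $(1-4\epsilon c_1C_*)^{-1/2}$ is then absorbed into a universal constant once $\epsilon<\epsilon_*$, so $c_1$ enters only through $\epsilon_*$. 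You should replace the fixed-radius split with this double MVT argument (equivalently, localize at radius $O(\sqrt{\epsilon})$ rather than $O(1)$) to get the stated form of $K$; otherwise your argument proves a weaker version of \eqref{eq:resultado} with a $c_1$-dependent constant.
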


Using the coupling approach, rates of
convergence of the time evolution to equilibrium in the Wasserstein distance for Langevin processes are given 
in~\cite{Eberle2019} for the underdamped dynamics and 
in~\cite{EberleTAMS2019} for the overdamped dynamics. 
In~\cite{Bolley}, linking functional inequalities with the dissipation to ensure a spectral gap, 
it is shown  that the solution of the  Fokker--Planck equation converges 
in Wasserstein distance of order $2$
to its equilibrium as the time evolution goes by. 
However, the authors in~\cite{Bolley}, \cite{Eberle2019} 
and~\cite{EberleTAMS2019} do not study small random perturbations of dynamical systems, and hence,  an asymptotic analysis for the invariant measure is not needed there.

The proof of Theorem~\ref{main}
does not rely on explicit computations of  $\mu^\epsilon$
neither on explicit formula of the Wasserstein distance of order $2$ between Gaussian distributions. The It\^o formula with the help of~\eqref{H} and~\eqref{eq:lipt} implies that  
 the $p$-moments are bounded recursively as a function of moments of order $p$ and $p-2$. 
Consequently, by an analogous reasoning (but more involved) one can see that the proof of Theorem~\ref{main} can be adapted for any $L^p$-Wasserstein distance for any $p\geq 1$.

\begin{remark}[A comment about total variation convergence for additive noise]
We stress that~\eqref{eq:resultado} does not imply directly any convergence of the corresponding densities.
In other words, the following approximation of densities
\begin{equation}\label{eq:convtv}
\mu^{\epsilon}(\ud x) \approx \epsilon^{-d/2}\mathcal{N}\left(\nicefrac{\ud x}{\sqrt{\epsilon}}\right)\quad \textrm{ for }\quad \epsilon \ll 1.
\end{equation}
cannot be straightforward  deduced from~\eqref{eq:resultado}.
For additive noise, that is, $\sigma(x)=I_d$ for all $x\in \mathbb{R}^d$, using Theorem~5.1, p. 30 in~\cite{Kabanov} (implicitly the celebrated Cameron--Martin--Girsanov Theorem)
it is shown that~\eqref{eq:convtv} is valid, see
Proposition~3.7, p. 1190 in~\cite{BJ1} for further details. However, no rate of convergence is given there.
Multiplicative noise is implicitly discussed in p. 123 
of~\cite{Day1987}.
\end{remark}

\begin{remark}[A word about the constant $K$]
The constant $K$ given in the right-hand side 
of~\eqref{eq:resultado} can be taken as 
\[
K=\frac{96c_0 d^2 \|\sigma(0_d)(\sigma(0_d))^*\|_{\mathrm{F}}}{\delta^2}+\frac{2\ell C^{\nicefrac{1}{2}}_0}{\delta}\quad \textrm{ with }\quad C_0=2\mathrm{Tr}((\sigma(0_d))^*\sigma(0_d)).
\]
We emphasize that the error term $K\sqrt{\epsilon}$ may not be optimal
\end{remark}

\begin{remark}[Existence, uniqueness and integral representation for the covariance matrix $\Sigma$]
By~\eqref{H} and~\eqref{eq:elip}, Theorem~1, p. ~443 
of~\cite{LANTI} implies that~\eqref{eq:mleq} possesses a unique solution. Moreover, Theorem~3, p.~414 of~\cite{LANTI} yields the integral representation for its solution
\[
\Sigma=\int_{0}^\infty
e^{-DF(0_d)s}
\sigma(0_d)(\sigma(0_d))^*
e^{-(DF(0_d))^*s}
\ud s.
\]
\end{remark}

As a consequence of Theorem~\ref{main} we have the following corollaries.
\begin{corollary}[$\mathcal{W}_p$ convergence for \textrm{$p\in [1,2]$}]\label{cor:p}
Assume Hypotheses~\eqref{H}, \eqref{C3}, \eqref{eq:lipt} 
and~\eqref{eq:elip} are valid.
Let $\mathcal{J}^\epsilon $, $\mathcal{N}$, $K$ and $\epsilon_*$ be as in Theorem~\ref{main}.
For any $p\in [1,2]$ it follows that
\[
\mathcal{W}_p\left(\frac{\mathcal{J}^{\epsilon}}{\sqrt{\epsilon}},\mathcal{N}\right)\leq K\sqrt{\epsilon}\quad \textrm{ for all }\quad \epsilon\in (0,\epsilon_*).
\]
\end{corollary}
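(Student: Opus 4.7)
The plan is to deduce Corollary~\ref{cor:p} directly from Theorem~\ref{main} via the standard monotonicity of the Wasserstein distance in its order parameter: for any $\mu,\nu\in \mathcal{P}_q$ and any $1\leq p\leq q$, one has $\mathcal{W}_p(\mu,\nu)\leq \mathcal{W}_q(\mu,\nu)$. Applying this inequality with $q=2$ together with \eqref{eq:resultado} immediately produces the claimed estimate
\[
\mathcal{W}_p\!\left(\frac{\mathcal{J}^{\epsilon}}{\sqrt{\epsilon}},\mathcal{N}\right)\leq \mathcal{W}_2\!\left(\frac{\mathcal{J}^{\epsilon}}{\sqrt{\epsilon}},\mathcal{N}\right)\leq K\sqrt{\epsilon}\qquad\textrm{for every }p\in[1,2].
\]

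To justify the monotonicity, I would fix an arbitrary coupling ${\pi}_*\in \Pi(\mu,\nu)$ and apply Jensen's inequality to the convex function $t\mapsto t^{q/p}$ on $[0,\infty)$ (convex since $q/p\geq 1$) with respect to the probability measure ${\pi}_*$. This yields
\[
\left(\int_{\mathbb{R}^d\times \mathbb{R}^d}\|x-y\|^p\,{\pi}_*(\ud x,\ud y)\right)^{\nicefrac{q}{p}}\leq \int_{\mathbb{R}^d\times\mathbb{R}^d}\|x-y\|^q\,{\pi}_*(\ud x,\ud y).
\]
Taking $q$-th roots on both sides and then the infimum over ${\pi}_*\in\Pi(\mu,\nu)$ on the right-hand side (which is still an upper bound for any fixed coupling on the left), followed by the infimum on the left, gives $\mathcal{W}_p(\mu,\nu)\leq \mathcal{W}_q(\mu,\nu)$.

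Before invoking this, one should check that the relevant laws sit in $\mathcal{P}_2$ so that all quantities are finite. The target Gaussian $\mathcal{N}$ has moments of every order, and the law of $\mathcal{J}^{\epsilon}/\sqrt{\epsilon}$ lies in $\mathcal{P}_2$ since this is already built into the conclusion of Theorem~\ref{main} (finiteness of $\mathcal{W}_2$ requires both arguments to be in $\mathcal{P}_2$, and by the scaling property \eqref{eq:scaling} this is equivalent to $\mu^{\epsilon}\in\mathcal{P}_2$). Consequently $\mathcal{W}_p$ is finite for all $p\in[1,2]$ and the chain of inequalities above is meaningful. There is no genuine obstacle in this argument: the corollary is purely a consequence of the monotonicity of $\mathcal{W}_p$ in $p$, which is presumably the reason it is stated as a separate corollary rather than absorbed into the main theorem.
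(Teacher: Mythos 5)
Your proof is correct and is essentially the same as the paper's, which simply invokes H\"older's inequality together with \eqref{eq:resultado}; your use of Jensen's inequality with $t\mapsto t^{q/p}$ is an equivalent way to establish the monotonicity $\mathcal{W}_p\leq\mathcal{W}_q$ for $p\leq q$.
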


\begin{proof}
The proof follows immediately by the H\"older inequality 
and~\eqref{eq:resultado}.
\end{proof}

\begin{corollary}[Concentration]\label{cor:conce}
Assume Hypotheses~\eqref{H}, \eqref{C3}, \eqref{eq:lipt} 
and~\eqref{eq:elip} are valid.
Let $\mathcal{J}^\epsilon $ be as in Theorem~\ref{main}.   
For any $p\in [1,2]$ and $\beta<\nicefrac{1}{2}$ it follows that
\[
\lim\limits_{\epsilon \to 0^+}\frac{1}{\epsilon^{\beta}}\mathcal{W}_p(\mathcal{J}^{\epsilon},{0_d})=0.
\]
\end{corollary}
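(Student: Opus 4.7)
The strategy is to exploit the scaling property \eqref{eq:scaling} to reduce this to the already-established Corollary~\ref{cor:p}. Since the Dirac mass at the origin is invariant under scalar multiplication (i.e.\ $\sqrt{\epsilon}\,\delta_{0_d}=\delta_{0_d}$), the scaling identity gives
\[
\mathcal{W}_p\bigl(\mathcal{J}^{\epsilon},\delta_{0_d}\bigr)=\sqrt{\epsilon}\,\mathcal{W}_p\!\left(\frac{\mathcal{J}^{\epsilon}}{\sqrt{\epsilon}},\delta_{0_d}\right),
\]
so my task reduces to controlling the rescaled Wasserstein distance to the origin.

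Next I would insert the Gaussian $\mathcal{N}$ as a pivot via the triangle inequality:
\[
\mathcal{W}_p\!\left(\frac{\mathcal{J}^{\epsilon}}{\sqrt{\epsilon}},\delta_{0_d}\right)\leq \mathcal{W}_p\!\left(\frac{\mathcal{J}^{\epsilon}}{\sqrt{\epsilon}},\mathcal{N}\right)+\mathcal{W}_p(\mathcal{N},\delta_{0_d}).
\]
For the first summand I apply Corollary~\ref{cor:p}, which gives $\mathcal{W}_p(\mathcal{J}^\epsilon/\sqrt{\epsilon},\mathcal{N})\leq K\sqrt{\epsilon}$ on $(0,\epsilon_*)$. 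For the second summand, the only coupling between a probability measure and a Dirac mass is the product, so $\mathcal{W}_p(\mathcal{N},\delta_{0_d})^p=\mathbb{E}\|Z\|^p$ with $Z\sim\mathcal{N}(0,\Sigma)$; since $\Sigma$ is a fixed positive definite matrix independent of $\epsilon$, this is a finite constant $M=M(\Sigma,p)$.

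Putting the two bounds together yields
\[
\mathcal{W}_p\bigl(\mathcal{J}^{\epsilon},\delta_{0_d}\bigr)\leq \sqrt{\epsilon}\bigl(K\sqrt{\epsilon}+M\bigr)=K\epsilon+M\sqrt{\epsilon}
\]
for every $\epsilon\in(0,\epsilon_*)$. Dividing by $\epsilon^{\beta}$ gives the upper bound $K\epsilon^{1-\beta}+M\epsilon^{1/2-\beta}$, and since $\beta<1/2$ both exponents are strictly positive, so the limit as $\epsilon\to 0^+$ is zero. There is no real obstacle here: the argument is essentially a scaling-plus-triangle-inequality trick, with the only substantive input being the quantitative bound from Theorem~\ref{main} (equivalently, Corollary~\ref{cor:p}), and the observation that the limiting Gaussian has moments of all orders.
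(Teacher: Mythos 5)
Your proof is correct and is essentially the same as the paper's: both apply the scaling property \eqref{eq:scaling}, the triangle inequality with $\mathcal{N}$ (or $\sqrt{\epsilon}\mathcal{N}$) as pivot, the bound from Corollary~\ref{cor:p}, and the elementary identity for the Wasserstein distance to a Dirac mass. You apply the scaling step before the triangle inequality while the paper does it afterward, but this is an immaterial reordering that yields the identical bound $K\epsilon + M\sqrt{\epsilon}$.
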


\begin{proof}
The proof follows by the triangle inequality for $\mathcal{W}_p$, Property~\eqref{eq:scaling} and Theorem~\ref{main}.
\end{proof}

The study of the concentration of the equilibrium measure
has been of considerable interest to physicists.
Theorem~1 in~\cite{Biswas2009} implies that $\mathcal{J}^\epsilon\to {0_d}$ as $\epsilon \to 0^+$ in distribution sense. However, it does not say anything about its rate of convergence as Corollary~\ref{cor:conce}.
Results about quantitative
concentration of stationary measures on
attractors and repellers for multiplicative noise are given 
in~\cite{Yingfei} 
and~\cite{YingfeiShen}.

The rest of the manuscript is organized as follows. 
Section~\ref{sec:outline} describes the outline of the proof for the main Theorem~\ref{main}.
Section~\ref{sec:proof} is
devoted to the proofs of the results skipped in 
Section~\ref{sec:outline}.
Finally, in the Appendix~\ref{ap:A}  we provide polynomials and exponential moments estimates for the Ornstein--Uhlenbeck process that we use  in 
Section~\ref{sec:proof}.

\section{\textbf{Outline of the proof}}\label{sec:outline} 

\subsection{\textbf{Linear diffusion approximation}}\label{sub:slda}
Due to the dissipativity condition~\eqref{H},
the nonlinear random dynamics $(X^{\epsilon}_t(x))_{t\geq 0}$ is pushed-back to the origin with high probability. 
In a neighbourhood of the origin, it is reasonable
that an Ornstein--Uhlenbeck process helps us to understand $(X^{\epsilon}_t(x))_{t\geq 0}$ for large times.
Let $(Y_t(x))_{t\geq 0}$ be the unique strong solution of the following linear SDE
\begin{equation}
\label{SEDONOU}
\left\{
\begin{array}{r@{\;=\;}l}
\ud Y^\epsilon_t(x) & - DF(0_d)Y^\epsilon_t(x) \ud t + \sqrt{\epsilon}\sigma(0_d)\ud B_t\quad 
\textrm{ for any }\quad t\geq 0,\\
Y^\epsilon_0(x) & x,
\end{array}
\right.
\end{equation}
where $(B_t)_{t\geq 0}$ is a standard Brownian motion on $\mathbb{R}^d$ and 
$DF(0_d)$ denotes the Jacobian matrix at the point $0_d$. 
The method of variation of parameters  yields
\begin{equation}\label{e:formulita}
\begin{split}
Y^{\epsilon}_t(x)=e^{-DF(0_d)t}x+\sqrt{\epsilon}\, e^{-DF(0_d)t}\int_{0}^{t}e^{DF(0_d)s}\sigma(0_d)\ud B_s\quad \textrm{ for any }\quad t\geq 0.
\end{split}
\end{equation}
Formula~\eqref{e:formulita} implies that
for any $t>0$, $Y^{\epsilon}_t(x)$ possesses Gaussian distribution with vector mean $m_t(x):=e^{-DF(0_d)t}x$ and covariance matrix 
$\Sigma^{\epsilon}_t:=\epsilon\Sigma_t$ for any $t\geq 0$,
where $(\Sigma_t)_{t\geq 0}$ solves the following matrix differential equation
\begin{equation}
\label{EDO111}
\left\{
\begin{array}{r@{\;=\;}l}
\frac{\ud}{ \ud t}\Sigma_t & -DF(0_d)\Sigma_t-\Sigma_t(DF(0_d))^*+\sigma(0_d)(\sigma(0_d))^*\quad \textrm{ for any }\quad t\geq 0,\\
\Sigma_0 & 0_{d\times d},
\end{array}
\right.
\end{equation}
where $0_{d\times d}$ is the $d$-squared zero matrix. We refer to Section~3.7 in~\cite{pavliotisbook} for further details.
By~\eqref{H} one can easily see that 
the eigenvalues of $DF(0_d)$ are contained in the set 
$\{z\in \mathbb{C}: \Re{(z)}\geq \delta\}$.
As a consequence, we have
\[
\|m_t(x)\|\leq e^{-\delta t}\|x\|\to 0 \quad \textrm{ as }\quad t\to \infty.
\]
If in addition, we assume that $\sigma(0_d)$ is invertible,
Lemma~\ref{lem:covariance} in Appendix~\ref{ap:A}  implies
\begin{equation*}
\|\Sigma_t-\Sigma\|_{\mathrm{F}}\leq
\|\Sigma\|^2_{\mathrm{F}} \, e^{-2\delta t}\to 0 \quad \textrm{ as }\quad t\to \infty,
\end{equation*}
where 
$\Sigma$ is the unique solution of the matrix Lyapunov equation~\eqref{eq:mleq}.
Therefore, the limiting distribution of $Y^{\epsilon}_t(x)$ is a Gaussian law with zero-mean vector and positive definite covariance matrix 
$\epsilon\Sigma$.
Moreover, Proposition~3.5 in~\cite{pavliotisbook} implies that $\mathcal{N}(0_d,\epsilon\Sigma)$
is the unique invariant probability measure for the dynamics given by~\eqref{SEDONOU}.
\subsection{\textbf{Disintegration}}
For short we write $\mathcal{N}$
in a place of $\mathcal{N}(0_d,\Sigma)$.
Recall that $\mathcal{J}^{\epsilon}$ denotes a random vector on $\mathbb{R}^d$ with distribution $\mu^{\epsilon}$.
Let $t\geq 0$ and $x_0\in \mathbb{R}^d$. The triangle inequality for the distance $\mathcal{W}_2$ yields
\begin{equation}\label{ine0}
\begin{split}
\mathcal{W}_2\left(\mathcal{J}^{\epsilon},\sqrt{\epsilon}\mathcal{N}\right)\leq  
\mathcal{W}_2\left(\mathcal{J}^{\epsilon},X^{\epsilon}_t(x_0)\right)+\mathcal{W}_2\left(X^{\epsilon}_t(x_0),Y^{\epsilon}_t(x_0)\right)+\mathcal{W}_2\left(Y^{\epsilon}_t(x_0),\sqrt{\epsilon}\mathcal{N}\right).
\end{split}
\end{equation}
Since $\mu^{\epsilon}$ is invariant for the 
dynamics~\eqref{over}, for any $t\geq 0$,
$X^{\epsilon}_t(\mathcal{J}^{\epsilon})$ has distribution $\mu^{\epsilon}$.
By disintegration, the first-term of the right-hand side 
of~\eqref{ine0} can be estimated as follows
\begin{align}\label{ine200}
\mathcal{W}_2\left(\mathcal{J}^{\epsilon},X^{\epsilon}_t(x_0)\right)
\leq \int_{\mathbb{R}^d}\mathcal{W}_2\left(X^{\epsilon}_t(x),X^{\epsilon}_t(x_0)\right)
\mu^{\epsilon}(\ud x).
\end{align}
Analogously,
\begin{align}\label{ine600}
\mathcal{W}_2\left(\sqrt{\epsilon}\mathcal{N},Y^{\epsilon}_t(x_0)\right)
\leq \int_{\mathbb{R}^d}\mathcal{W}_2\left(Y^{\epsilon}_t(x),Y^{\epsilon}_t(x_0)\right)
\mathcal{N}(0_d,\epsilon \Sigma)(\ud x),
\end{align}
where $\mathcal{N}(0_d,\epsilon \Sigma)(\ud x)$ denotes the density of $\sqrt{\epsilon}\mathcal{N}$.
Combining~\eqref{ine0},  
\eqref{ine200} and~\eqref{ine600} we obtain
\begin{align*}
\mathcal{W}_2\left(\mathcal{J}^{\epsilon},\sqrt{\epsilon}\mathcal{N}\right)\leq & \int_{\mathbb{R}^d}\mathcal{W}_2\left(X^{\epsilon}_t(x),X^{\epsilon}_t(x_0)\right)
\mu^{\epsilon}(\ud x)
+\mathcal{W}_2\left(X^{\epsilon}_t(x_0),Y^{\epsilon}_t(x_0)\right)\\
&\quad+\int_{\mathbb{R}^d}\mathcal{W}_2\left(Y^{\epsilon}_t(x),Y^{\epsilon}_t(x_0)\right)
\mathcal{N}(0_d,\epsilon \Sigma)(\ud x)
\end{align*}
for any $t\geq 0$ and $x_0\in \mathbb{R}^d$.
In particular,  for any $t\geq 0$ we have
\begin{equation}\label{formulainequality}
\begin{split}
\mathcal{W}_2\left(\mathcal{J}^{\epsilon},\sqrt{\epsilon}\mathcal{N}\right)\leq & \int_{\mathbb{R}^d}\mathcal{W}_2\left(X^{\epsilon}_t(x),X^{\epsilon}_t(0_d)\right)
\mu^{\epsilon}(\ud x)
+\mathcal{W}_2\left(X^{\epsilon}_t(0_d),Y^{\epsilon}_t(0_d)\right)\\
&\quad+\int_{\mathbb{R}^d}\mathcal{W}_2\left(Y^{\epsilon}_t(x),Y^{\epsilon}_t(0_d)\right)
\mathcal{N}(0_d,\epsilon \Sigma)(\ud x).
\end{split}
\end{equation}
In what follows, we provide the tools for estimating the right-hand side of~\eqref{formulainequality}.
The following lemma allows us to couple two solutions 
of~\eqref{over} starting in  different initial conditions.
\begin{lemma}[Synchronous coupling I]\label{lemmaA}
Assume Hypotheses~\eqref{H} and~\eqref{eq:lipt} are valid.
Let $x,x_0\in \mathbb{R}^d$. Then
\[
\mathcal{W}_2\left(X^{\epsilon}_t(x),X^{\epsilon}_t(x_0)\right)\leq e^{-(\nicefrac{\delta}{2}) t}\|x-x_0\|\quad \textrm{ for all }\quad t\geq 0,\,\epsilon \in (0,\nicefrac{\delta}{\ell^2}],
\]
where $\delta>0$ is the dissipativity constant that appears 
in~\eqref{H} and $\ell$ is the Lipschitz constant that appears in~\eqref{eq:lipt}.
In particular, 
\[
\mathcal{W}_2\left(X^{\epsilon}_t(x),X^{\epsilon}_t(0_d)\right)\leq e^{-(\nicefrac{\delta}{2}) t}\|x\|\quad \textrm{ for all }\quad t\geq 0,\,\epsilon \in (0,\nicefrac{\delta}{\ell^2}].
\]
\end{lemma}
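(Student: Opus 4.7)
The plan is to prove the lemma by a synchronous coupling: construct both $X^{\epsilon}_t(x)$ and $X^{\epsilon}_t(x_0)$ on the same probability space by driving them with the same Brownian motion $(B_t)_{t\ge 0}$. Setting $Z_t:=X^{\epsilon}_t(x)-X^{\epsilon}_t(x_0)$, this produces a valid element of $\Pi(\mathbb{P}_{X^{\epsilon}_t(x)},\mathbb{P}_{X^{\epsilon}_t(x_0)})$, so it suffices to control $\mathbb{E}[\|Z_t\|^2]$.

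First I would apply It\^o's formula to $\|Z_t\|^2$. Since
\[
\ud Z_t=-\bigl(F(X^{\epsilon}_t(x))-F(X^{\epsilon}_t(x_0))\bigr)\ud t+\sqrt{\epsilon}\bigl(\sigma(X^{\epsilon}_t(x))-\sigma(X^{\epsilon}_t(x_0))\bigr)\ud B_t,
\]
one obtains
\[
\ud\|Z_t\|^2=-2\<Z_t,F(X^{\epsilon}_t(x))-F(X^{\epsilon}_t(x_0))\>\ud t+\epsilon\|\sigma(X^{\epsilon}_t(x))-\sigma(X^{\epsilon}_t(x_0))\|_{\mathrm{F}}^2\ud t+\ud M_t,
\]
where $M_t$ is a local martingale. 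By \eqref{H} the drift term is bounded above by $-2\delta\|Z_t\|^2\ud t$, while by \eqref{eq:lipt} the quadratic variation term is bounded above by $\epsilon\ell^2\|Z_t\|^2\ud t$. Combining these estimates gives
\[
\ud\|Z_t\|^2\le(-2\delta+\epsilon\ell^2)\|Z_t\|^2\ud t+\ud M_t.
\]

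Next, using a standard localization argument (stopping at $\tau_n:=\inf\{t\ge 0:\|X^{\epsilon}_t(x)\|\vee\|X^{\epsilon}_t(x_0)\|\ge n\}$ to kill the martingale piece, together with $p$-moment bounds alluded to after Theorem~\ref{main} which guarantee $\tau_n\to\infty$), one passes to expectations and applies Gronwall's inequality to obtain
\[
\mathbb{E}[\|Z_t\|^2]\le e^{(-2\delta+\epsilon\ell^2)t}\|x-x_0\|^2.
\]
For $\epsilon\in(0,\nicefrac{\delta}{\ell^2}]$ the exponent satisfies $-2\delta+\epsilon\ell^2\le-\delta$, hence $\mathbb{E}[\|Z_t\|^2]\le e^{-\delta t}\|x-x_0\|^2$. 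Taking square roots and using $\mathcal{W}_2(X^{\epsilon}_t(x),X^{\epsilon}_t(x_0))\le\sqrt{\mathbb{E}[\|Z_t\|^2]}$ yields the first bound; specializing to $x_0=0_d$ gives the second.

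I do not anticipate any deep obstacle. The only slightly delicate point is justifying that the It\^o integral is a genuine martingale (so its expectation vanishes) rather than merely a local martingale, which is handled by the localization sketched above together with the fact that the exponential growth hypothesis \eqref{C3} and Lipschitz hypothesis \eqref{eq:lipt} ensure finite moments of $X^{\epsilon}_t(x)$ on every compact time interval. Everything else is the routine interplay of \eqref{H}, \eqref{eq:lipt}, and Gronwall.
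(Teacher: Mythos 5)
Your proof is correct and follows essentially the same route as the paper: synchronous coupling, It\^o's formula for $\|X^{\epsilon}_t(x)-X^{\epsilon}_t(x_0)\|^2$, bounding the drift by $-2\delta$ via \eqref{H} and the quadratic variation by $\epsilon\ell^2$ via \eqref{eq:lipt}, then localization and Gr\"onwall to get $e^{-(2\delta-\epsilon\ell^2)t}\|x-x_0\|^2$ and finally $e^{-\delta t}\|x-x_0\|^2$ for $\epsilon\le\nicefrac{\delta}{\ell^2}$. No gaps.
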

The following lemma provides second moment estimates for the marginals of the process~\eqref{over} and also for its invariant probability measure $\mu^\epsilon$.
\begin{lemma}[Second moment estimates]\label{lemmaB}
Assume Hypotheses~\eqref{H} and~\eqref{eq:lipt} are valid.
For any $x\in \mathbb{R}^d$ we have
\[
\mathbb{E}[\|X^{\epsilon}_t(x)\|^2]\leq 
\|x\|^2 e^{-\delta t}+\frac{ \epsilon C_0}{\delta}\quad \textrm{ for all }\quad t\geq 0,\,\epsilon\in (0,\nicefrac{\delta}{(2\ell^2})],
\]
where $\delta>0$ is the dissipativity constant that appears 
in~\eqref{H}, $\ell$ is the Lipschitz constant that appears 
in~\eqref{eq:lipt}
and $C_0=2\mathrm{Tr}((\sigma(0_d))^*\sigma(0_d))$.
In addition,
\begin{equation}\label{eq:limmomen}
\int_{\mathbb{R}^d}\|x\|^2 \mu^{\epsilon}(\ud x)\leq \frac{\epsilon C_0}{\delta}
\quad \textrm{ for all }\quad \epsilon\in (0,\nicefrac{\delta}{(2\ell^2})].
\end{equation}
\end{lemma}
The next lemma is crucial in our argument. Due to the contracting nature of the dynamics, the random dynamics around zero, 
$(X^\epsilon_t(0_d))_{t\geq 0}$, can be approximated from its linearization $(Y^\epsilon_t(0_d))_{t\geq 0}$.
\begin{lemma}[Synchronous coupling II]\label{lemmaC}
Assume Hypotheses~\eqref{H}, \eqref{C3}, \eqref{eq:lipt} 
and~\eqref{eq:elip} are valid.
Then there exists a positive constant $C:=C(\delta,\ell, d,c_0,\sigma(0_d))$ such that for any $\epsilon\in (0,\epsilon_*)$ with
\[
\epsilon_*:=\min\left\{\frac{\delta}{8c_1\|\sigma(0_d)(\sigma(0_d))^*\|_{\mathrm{F}}\cdot d^2},\frac{ \delta}{2\ell^2}\right\},
\]
 and for all $t\geq 0$ we have
\[
\mathcal{W}_2\left(X^{\epsilon}_t(0_d),Y^{\epsilon}_t(0_d)\right)\leq 
 C \epsilon,
\]
where $\delta>0$ is the dissipativity constant that appears 
in~\eqref{H},
$c_0$, $c_1$ are the positive constants that appear 
in~\eqref{C3} 
and
$\ell$ is the Lipschitz constant that appears 
in~\eqref{eq:lipt}.
\end{lemma}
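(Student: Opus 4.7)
The plan is to realize both processes $X^\epsilon_t(0_d)$ and $Y^\epsilon_t(0_d)$ on a common probability space, driven by the \emph{same} Brownian motion $(B_t)_{t\geq 0}$ (synchronous coupling), and to control the difference $Z^\epsilon_t := X^\epsilon_t(0_d) - Y^\epsilon_t(0_d)$ in $L^2(\Omega)$. Since $Z^\epsilon_0 = 0$, this coupling gives $\mathcal{W}_2(X^\epsilon_t(0_d),Y^\epsilon_t(0_d))^2 \le \mathbb{E}\|Z^\epsilon_t\|^2$, so it suffices to show $\mathbb{E}\|Z^\epsilon_t\|^2 \le C^2\epsilon^2$ uniformly in $t\geq 0$.

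\textbf{It\^o's formula and drift decomposition.} Apply It\^o's formula to $\|Z^\epsilon_t\|^2$. The drift term splits as
\[
F(X^\epsilon_t(0_d)) - DF(0_d)Y^\epsilon_t(0_d) = \bigl[F(X^\epsilon_t(0_d))-F(Y^\epsilon_t(0_d))\bigr] + \bigl[F(Y^\epsilon_t(0_d)) - DF(0_d)Y^\epsilon_t(0_d)\bigr].
\]
Pair with $Z^\epsilon_t$: the first bracket yields, via the Bakry--\'Emery condition \eqref{H}, the contractive contribution $-2\delta\,\mathbb{E}\|Z^\epsilon_t\|^2$. The second bracket is the \emph{linearization remainder} $R(Y^\epsilon_t(0_d))$ which is handled by Cauchy--Schwarz, producing a term of the form $2(\mathbb{E}\|Z^\epsilon_t\|^2)^{1/2}(\mathbb{E}\|R(Y^\epsilon_t(0_d))\|^2)^{1/2}$. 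For the quadratic variation, the Lipschitz bound \eqref{eq:lipt} gives $\|\sigma(X^\epsilon_t(0_d))-\sigma(0_d)\|_{\mathrm{F}}^2 \le \ell^2\|X^\epsilon_t(0_d)\|^2$, and Lemma~\ref{lemmaB} (applied with $x=0_d$) bounds its expectation by $\ell^2\epsilon C_0/\delta$, which is of order $\epsilon$. Multiplied by the prefactor $\epsilon$ coming from the noise amplitude, this contributes a term of order $\epsilon^2$.

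\textbf{Bounding the linearization remainder.} This is the key step and the main technical obstacle. The exponential growth condition \eqref{C3} together with $\|DF(0_d)y\|\le \|DF(0_d)\|\|y\|$ yields the crude bound $\|R(y)\|^2 \le 2c_0^2 e^{2c_1\|y\|^2} + 2\|DF(0_d)\|^2\|y\|^2$. Because $Y^\epsilon_t(0_d)$ is Gaussian with covariance $\epsilon\Sigma_t$ and $\Sigma_t$ is controlled uniformly in $t$ by Lemma~\ref{lem:covariance}, the exponential moment $\mathbb{E}[\exp(2c_1\|Y^\epsilon_t(0_d)\|^2)]$ is finite and uniformly bounded in $t$ provided $2c_1 \cdot \epsilon \cdot \|\Sigma\|_{\mathrm{F}}$ (or, up to a dimension factor, $2c_1\epsilon\|\sigma(0_d)\sigma(0_d)^*\|_{\mathrm{F}}\cdot d^2$) is small enough; this is exactly the role of the threshold $\epsilon_*$ in the statement. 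Crucially, since $F\in\mathcal{C}^2$ with $F(0_d)=0$ and $DF(0_d)$ as its linear part at the origin, the Taylor remainder satisfies $\|R(y)\|=O(\|y\|^2)$ near the origin, so combining with the Gaussian tails one obtains $\mathbb{E}\|R(Y^\epsilon_t(0_d))\|^2 \le C_1 \epsilon^2$ uniformly in $t\geq 0$, for some $C_1=C_1(\delta,d,c_0,c_1,\sigma(0_d))$. The exponential moment and polynomial moment estimates promised in Appendix~\ref{Ap:moments} will do the heavy lifting here.

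\textbf{Gr\"onwall and conclusion.} Denote $\phi(t)=\mathbb{E}\|Z^\epsilon_t\|^2$. Collecting the three contributions, one arrives at a differential inequality of the form
\[
\phi'(t) \le -2\delta\,\phi(t) + 2\sqrt{C_1}\,\epsilon\,\sqrt{\phi(t)} + C_2\,\epsilon^2,
\]
with $\phi(0)=0$. A standard Young/Cauchy inequality absorbs the middle term: $2\sqrt{C_1}\,\epsilon\sqrt{\phi(t)} \le \delta\phi(t) + C_1\epsilon^2/\delta$, leaving $\phi'(t) \le -\delta\phi(t) + C_3\epsilon^2$, whence $\phi(t)\le C_3\epsilon^2/\delta$ for all $t\geq 0$ by Gr\"onwall. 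Taking the square root gives the claimed bound $\mathcal{W}_2(X^\epsilon_t(0_d),Y^\epsilon_t(0_d))\le C\epsilon$ with an explicit $C=C(\delta,\ell,d,c_0,\sigma(0_d))$. The principal difficulty is the control of the exponential moments of $\|Y^\epsilon_t(0_d)\|^2$ uniformly in time, which forces the quantitative restriction on $\epsilon_*$ involving $c_1$ and $\|\sigma(0_d)\sigma(0_d)^*\|_{\mathrm{F}}$.
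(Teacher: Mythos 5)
Your overall strategy is exactly the paper's: synchronous coupling, It\^o on $\|Z^\epsilon_t\|^2$, split the drift into the dissipative part (handled by \eqref{H}) and the linearization remainder, bound the quadratic-variation term via \eqref{eq:lipt} and Lemma~\ref{lemmaB}, absorb the cross term with Young's inequality, and finish by Gr\"onwall. All of that matches.

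The one place your sketch does not actually deliver is the central estimate $\mathbb{E}\|R(Y^\epsilon_t(0_d))\|^2 \lesssim \epsilon^2$. The ``crude bound'' $\|R(y)\|^2\le 2c_0^2e^{2c_1\|y\|^2}+2\|DF(0_d)\|^2\|y\|^2$ you write down is genuinely insufficient: the first summand has expectation of order $1$ (not $\epsilon^2$), and the second gives only $O(\epsilon)$ after taking expectations of $\|Y^\epsilon_t\|^2$. You then pivot to ``the Taylor remainder satisfies $\|R(y)\|=O(\|y\|^2)$ near the origin, so combining with the Gaussian tails\dots'', but this pivot is where the actual work lives and you do not say how the local $O(\|y\|^2)$ bound and the global exponential control are to be combined. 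What the paper does is apply the mean-value theorem twice to get the single \emph{global} pointwise inequality
\[
\|F(y)-DF(0_d)y\|\le\Bigl(\int_0^1\int_0^1\|D^2F(\theta_1\theta_2 y)\|\,\ud\theta_1\ud\theta_2\Bigr)\|y\|^2\le c_0\,e^{c_1\|y\|^2}\|y\|^2,
\]
which carries the quadratic factor $\|y\|^2$ \emph{together with} the controlled exponential weight. After the scaling $Y^\epsilon_s(0_d)=\sqrt{\epsilon}\,Y_s$ this yields $\|R(Y^\epsilon_s(0_d))\|^2\le c_0^2\epsilon^2 e^{2c_1\epsilon\|Y_s\|^2}\|Y_s\|^4$, and Cauchy--Schwarz together with the eighth-moment and exponential-moment estimates of Lemma~\ref{A1} finishes the job; this is precisely where the restriction $\epsilon<1/(8c_1 C_*)$, i.e. your $\epsilon_*$, enters. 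Without this unified bound (or an equivalent careful split of the expectation into $\{\|Y^\epsilon\|\le R\}$ and its complement with quantitative control on both pieces), the claimed $O(\epsilon^2)$ does not follow from what you have written. The rest of your argument is fine once this step is made precise.
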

We point out that the constant $C$ can be taken as 
\[
C=\frac{48c_0 d^2 \|\sigma(0_d)(\sigma(0_d))^*\|_{\mathrm{F}}}{\delta^2}+\frac{\ell C^{\nicefrac{1}{2}}_0}{\delta}.
\]
The latter is deduced from~\eqref{eq:contc}.
Recall that $\Sigma$ is the solution of~\eqref{EDO111}.
Since for any $t\geq 0$, $Y^{\epsilon}_t(\mathcal{N}(0_d,\epsilon\Sigma_t))$ has distribution  $\mathcal{N}(0_d,\epsilon\Sigma)$, an analogous reasoning used in the proofs of Lemma~\ref{lemmaA} and Lemma~\ref{lemmaB} implies the following lemma.
\begin{lemma}[Synchronous coupling III]\label{lemmaD}
Assume Hypotheses~\eqref{H} and~\eqref{eq:lipt} are valid.
For any $x\in \mathbb{R}^d$ it follows that
\[
\mathcal{W}_2\left(Y^{\epsilon}_t(x),Y^{\epsilon}_t(0_d)\right)\leq e^{-(\nicefrac{\delta}{2}) t}\|x\|\quad \textrm{ for all }\quad t\geq 0,\,\epsilon \in (0,\nicefrac{\delta}{\ell^2}],
\]
where $\delta>0$ is the dissipativity constant that appears 
in~\eqref{H} and $\ell$ is the Lipschitz constant that appears in~\eqref{eq:lipt}.
In addition, assume that $\sigma(0_d)$ is invertible. Then it follows that
\begin{equation}\label{e:normalint}
\int_{\mathbb{R}^d}\|x\|^2 \mathcal{N}(0_d,\epsilon \Sigma)(\ud x)\leq  \epsilon d \|\Sigma^{\nicefrac{1}{2}}\|^2_{\mathrm{F}}.
\end{equation}
\end{lemma}
For simplicity we assume that $\sigma(0_d)$ is invertible in Lemma~\ref{lemmaD}. Actually, it is not needed to obtain an estimate such as~\eqref{e:normalint}. Nevertheless, it is enforced to define the so-called generalized Gaussian distribution with degenerate covariance matrix and hence the notion of Moore--Penrose pseudoinverse is required.
The assumption that $\sigma(0_d)$ is invertible can be removed and~\eqref{eq:limmomen}  in Lemma~\ref{lemmaD} remains valid replacing $\mu^\epsilon$ by the law of $\mathcal{N}(0_d,\epsilon \Sigma)$.

In the sequel, we stress the fact that 
Theorem~\ref{main} is just a consequence of what we have already stated up to here. 
\begin{proof}[Theorem~\ref{main}]
By~\eqref{formulainequality}, Lemma~\ref{lemmaA}, Lemma~\ref{lemmaB}, Lemma~\ref{lemmaC} and Lemma~\ref{lemmaD} we have
\begin{equation}\label{eq:inecomplete}
\mathcal{W}_2\left(\mathcal{J}^{\epsilon},\sqrt{\epsilon}\mathcal{N}\right)\leq  
\sqrt{\frac{\epsilon C_0}{\delta}}e^{-(\nicefrac{\delta}{2}) t}
+C\epsilon
+
\sqrt{\epsilon d \|\Sigma^{\nicefrac{1}{2}}\|^2_{\mathrm{F}}}e^{-(\nicefrac{\delta}{2}) t}
\end{equation}
for any $t\geq 0$ and $\epsilon\in (0,\epsilon_*]$.
Due to~\eqref{eq:scaling},  \eqref{eq:inecomplete} implies
\begin{equation*}
\mathcal{W}_2\left(\frac{\mathcal{J}^{\epsilon}}{\sqrt{\epsilon}},\mathcal{N}\right)\leq  \sqrt{\frac{ C_0}{\delta}}e^{-(\nicefrac{\delta}{2}) t}
+C\sqrt{\epsilon}+
\sqrt{ d \|\Sigma^{\nicefrac{1}{2}}\|^2_{\mathrm{F}}}e^{-(\nicefrac{\delta}{2}) t}
\end{equation*}
for any $t \geq 0$ and $\epsilon\in (0,\epsilon_*]$.
The cunning choice 
\[
t_\epsilon=\max\left\{\frac{1}{\delta}\ln\left(\frac{4C_0}{\delta C^2\epsilon}\right),\frac{1}{\delta}\ln\left(\frac{4d \|\Sigma^{\nicefrac{1}{2}}\|^2_{\mathrm{F}}}{ C^2\epsilon}\right)\right\}
\]
 yields
\[
\mathcal{W}_2\left(\frac{\mathcal{J}^{\epsilon}}{\sqrt{\epsilon}},\mathcal{N}\right)\leq 2C\sqrt{\epsilon},
\]
which concludes Theorem~\ref{main}.
\end{proof}
\section{\textbf{Proofs}}\label{sec:proof}
In this section, we give the proofs of Lemma~\ref{lemmaA}, Lemma~\ref{lemmaB} and Lemma~\ref{lemmaC}.
Along their proofs, we use several times
the celebrated Gr\"onwall inequality.
We state it here as a lemma for the sake of completeness.
\begin{lemma}[Gr\"onwall's inequality]\label{lem:gron} Let $T>0$ be fixed, $\mathfrak{g}:[0,T]\rightarrow \mathbb{R}$
be a $\mathcal{C}^1$-function and 
$\mathfrak{h}:[0,T]\rightarrow \mathbb{R}$ be a $\mathcal{C}^0$-function. Assume that
\[
\frac{\ud}{\ud t} \mathfrak{g}(t)\leq -a  
\mathfrak{g}(t) + \mathfrak{h}(t)\quad 
\textrm{ for any }\quad t \in [0, T],
\]
where $a\in\mathbb{R}$, and the derivative at $0$ and $T$ are understood as the right
and left derivatives, respectively. 
Then
\[
\mathfrak{g}(t)\leq e^{-at}\mathfrak{g}(0) + e^{-at}\int_{0}^{t}
e^{as}\mathfrak{h}(s)\ud s\quad \textrm{ for any }\quad t\in [0, T].
\]
\end{lemma}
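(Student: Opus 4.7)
The plan is to reduce the differential inequality to a derivative estimate by the standard integrating-factor trick. First I would introduce the auxiliary function $\mathfrak{F}(t):=e^{at}\mathfrak{g}(t)$ on $[0,T]$. Since $\mathfrak{g}\in\mathcal{C}^1([0,T])$ and $t\mapsto e^{at}$ is smooth, $\mathfrak{F}\in\mathcal{C}^1([0,T])$ (with one-sided derivatives at the endpoints), and the product rule gives
\[
\frac{\ud}{\ud t}\mathfrak{F}(t)=e^{at}\frac{\ud}{\ud t}\mathfrak{g}(t)+ae^{at}\mathfrak{g}(t).
\]

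Next I would feed the hypothesis $\frac{\ud}{\ud t}\mathfrak{g}(t)\leq -a\mathfrak{g}(t)+\mathfrak{h}(t)$ into the right-hand side above. The term $ae^{at}\mathfrak{g}(t)$ cancels against $-ae^{at}\mathfrak{g}(t)$, leaving the clean pointwise bound
\[
\frac{\ud}{\ud t}\mathfrak{F}(t)\leq e^{at}\mathfrak{h}(s)\bigr|_{s=t}\quad\textrm{for any } t\in[0,T],
\]
valid with the usual one-sided interpretation at the endpoints.

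Integrating this inequality from $0$ to $t$, using the fundamental theorem of calculus for the $\mathcal{C}^1$-function $\mathfrak{F}$ together with the continuity of $\mathfrak{h}$ (so that $s\mapsto e^{as}\mathfrak{h}(s)$ is Riemann integrable on $[0,t]$), I would obtain
\[
\mathfrak{F}(t)-\mathfrak{F}(0)\leq \int_{0}^{t}e^{as}\mathfrak{h}(s)\ud s.
\]
Finally, replacing $\mathfrak{F}(t)$ and $\mathfrak{F}(0)$ by their definitions and multiplying through by the positive factor $e^{-at}$ yields exactly the claimed estimate. There is no real obstacle: the only point requiring any care is the passage from the differential inequality to its integrated form at the endpoints $0$ and $T$, which is handled by interpreting the derivatives one-sidedly and invoking the $\mathcal{C}^1$-regularity of $\mathfrak{g}$, so $\mathfrak{F}$ is absolutely continuous and the integration step is justified verbatim.
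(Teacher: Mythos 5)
Your proof is correct: the integrating-factor argument (set $\mathfrak{F}(t)=e^{at}\mathfrak{g}(t)$, observe $\mathfrak{F}'(t)\leq e^{at}\mathfrak{h}(t)$, integrate, and divide out $e^{at}$) is the canonical route to this differential form of Gr\"onwall's inequality, and every step is justified by the stated $\mathcal{C}^1$/$\mathcal{C}^0$ regularity. The paper itself states this lemma without proof, citing it merely ``for the sake of completeness,'' so there is nothing to compare against; your argument is exactly the standard one the authors implicitly rely on.
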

\subsection{\textbf{The synchronous coupling I}}
For any $x,x_0\in \mathbb{R}^d$, let 
$(X^{\epsilon}_t(x))_{t\geq 0}$ and $(X^{\epsilon}_t(x_0))_{t\geq 0}$ be the solutions of~\eqref{over} with initial conditions $x$ and $x_0$, respectively.
In the sequel, we consider the so-called synchronous coupling, i.e., both processes $(X^{\epsilon}_t(x))_{t\geq 0}$ and $(X^{\epsilon}_t(x_0))_{t\geq 0}$ have  the same  driving noise $(B_t)_{t\geq 0}$.
\begin{proof}[Lemma~\ref{lemmaA}]
By the It\^o formula we have
\begin{align*}
\ud \|X^{\epsilon}_t(x)-& X^{\epsilon}_t(x_0)\|^2=-2\<X^{\epsilon}_t(x)-X^{\epsilon}_t(x_0),F(X^{\epsilon}_t(x)) -F(X^{\epsilon}_t(x_0))\>\ud t\\
&\hspace{3cm}+
\epsilon\,\mathrm{Tr}[(\sigma(X^{\epsilon}_t(x))-\sigma(X^{\epsilon}_t(x_0)))^* (\sigma(X^{\epsilon}_t(x))-\sigma(X^{\epsilon}_t(x_0)))]\ud t
\\
&\hspace{3cm}+2\sqrt{\epsilon}\<X^{\epsilon}_t(x)-X^{\epsilon}_t(x_0),(\sigma(X^{\epsilon}_t(x))-\sigma(X^{\epsilon}_t(x_0)))\ud B_t\>.
\end{align*}
By~\eqref{eq:lipt} we have
\begin{equation}\label{eq:traze1}
\begin{split}
\mathrm{Tr}[(\sigma(X^{\epsilon}_t(x))- \sigma(X^{\epsilon}_t(x_0)))^* (\sigma(X^{\epsilon}_t(x))-\sigma(X^{\epsilon}_t(x_0)))]\leq \ell^2  \|X^{\epsilon}_t(x)-X^{\epsilon}_t(x_0)\|^2.
\end{split}
\end{equation}
A localization argument with the help of~\eqref{H} 
and~\eqref{eq:traze1}
implies 
\begin{align*}
\frac{\ud}{\ud t} \mathbb{E}[\|X^{\epsilon}_t(x)-X^{\epsilon}_t(x_0)\|^2] &\leq -2\delta \mathbb{E}\left[\|X^{\epsilon}_t(x)-X^{\epsilon}_t(x_0)\|^2\right]+ \epsilon  \ell^2  \, \mathbb{E}\left[\|X^{\epsilon}_t(x)-X^{\epsilon}_t(x_0)\|^2\right]\\
&\leq -(2\delta-\epsilon  \ell^2)\mathbb{E}\left[\|X^{\epsilon}_t(x)-X^{\epsilon}_t(x_0)\|^2\right]
\end{align*}
for all $t\geq 0$.
Since $\mathbb{E}[\|X^{\epsilon}_0(x)-X^{\epsilon}_0(x_0)\|^2]=\|x-x_0\|^2$, Lemma~\ref{lem:gron} yields
\[
\mathbb{E}[\|X^{\epsilon}_t(x)-X^{\epsilon}_t(x_0)\|^2]\leq e^{-(2\delta-\epsilon  \ell^2) t}\|x-x_0\|^2\quad \textrm{  for  any }\quad t\geq 0.
\]
Therefore, for any
$\epsilon\in (0,\nicefrac{\delta}{\ell^2}]$
we have 
\[
\mathcal{W}_2(X^{\epsilon}_t(x),X^{\epsilon}_t(x_0))
\leq e^{-(\nicefrac{\delta}{2}) t}\|x-x_0\|
\quad \textrm{ for any }\quad
x,x_0\in \mathbb{R}^d,\; t\geq 0.
\]
\end{proof}
\subsection{\textbf{Second moment estimates}}
For any $x\in \mathbb{R}^d$, let 
$(X^{\epsilon}_t(x))_{t\geq 0}$ be the solution 
of~\eqref{over} with initial condition $x$.
\begin{proof}[Lemma~\ref{lemmaB}]
In the sequel, we  estimate 
$\mathbb{E}[\|X^\epsilon_t(x)\|^2]$. 
The It\^o formula
and~\eqref{H} yield
\begin{align*}
\ud \|X^\epsilon_t(x)\|^2
		&= -2 \<X^\epsilon_t(x), F(X^\epsilon_t(x))\>\ud t+  \epsilon\,\mathrm{Tr}[(\sigma(X^{\epsilon}_t(x)))^*
\sigma(X^{\epsilon}_t(x))]+M^\epsilon_t(x) \nonumber\\
		&\leq -2\delta \|X^\epsilon_t(x)\|^2 \ud t +
\epsilon\,\mathrm{Tr}[(\sigma(X^{\epsilon}_t(x)))^*
\sigma(X^{\epsilon}_t(x))]		
		 \ud t
		+M^\epsilon_t(x),
\end{align*}
where $M^\epsilon_t(x):=\<2\sqrt{\epsilon}X^\epsilon_t(x),\ud B_t\>$ for every $t\geq 0$.
Since
\begin{equation*}
\begin{split}
\mathrm{Tr}[(\sigma(X^{\epsilon}_t(x)))^*
\sigma(X^{\epsilon}_t(x))]\leq 2
\mathrm{Tr}[(\sigma(X^{\epsilon}_t(x))-\sigma(0_d))^*(
\sigma(X^{\epsilon}_t(x))-\sigma(0_d))]+2\mathrm{Tr}((\sigma(0_d))^*\sigma(0_d)),
\end{split}
\end{equation*}
Hypothesis~\eqref{eq:lipt} implies
\begin{equation}\label{eq:traza2}
\begin{split}
\mathrm{Tr}[(\sigma(X^{\epsilon}_t(x)))^*
\sigma(X^{\epsilon}_t(x))]\leq 
2\ell^2 \|X^{\epsilon}_t(x) \|^2+C_0,
\end{split}
\end{equation}
where $C_0:=2\mathrm{Tr}((\sigma(0_d))^*\sigma(0_d))$.
 A localization argument with the help of~\eqref{H} 
 and~\eqref{eq:traza2}
implies
\begin{align*}
\frac{\ud}{\ud t}\mathbb{E}[\|X^\epsilon_t(x)\|^2]
	\leq -(2\delta-2\epsilon \ell^2 ) \mathbb{E}[\|X^\epsilon_t(x)\|^2] + \epsilon C_0 \quad \textrm{ for any }\quad t\geq 0.
\end{align*}
Since $\mathbb{E}[\|X^\epsilon_0(x)\|^2]=\|x\|^2$,
for any
$\epsilon\in (0,\nicefrac{\delta}{(2\ell^2})]$
Lemma~\ref{lem:gron}
yields
\begin{equation}\label{e:e1}
\mathbb{E}[\|X^\epsilon_t(x)\|^2]\leq e^{-\delta t}\|x\|^2 +
\frac{\epsilon C_0}{\delta}(1-e^{-\delta t})\leq e^{-\delta t}\|x\|^2 +\frac{\epsilon C_0}{\delta}
\end{equation}
for any $t\geq 0$ and  $x\in \mathbb{R}^d$.
Following the same reasoning used on  p.~39 in~\cite{BJ1},
it is not hard to see that~\eqref{e:e1} implies 
\begin{equation*}
\int_{\mathbb{R}^d} \|x\|^2\mu^{\epsilon}(\ud x)\leq \frac{\epsilon C_0}{\delta}\quad \textrm{ for all }\quad \epsilon\in (0,\nicefrac{\delta}{(2\ell^2})]. 
\end{equation*}
\end{proof}
\subsection{\textbf{The synchronous coupling II}}\label{sec: natcouII}
We consider the  
solution of~\eqref{over} with initial condition $x=0_d$,
$(X^{\epsilon}_t(0_d))_{t\geq 0}$. 
Let $(Y^{\epsilon}_t(0_d))_{t\geq 0}$ be as~\eqref{SEDONOU}. In this section, we use the synchronous coupling between 
$X^{\epsilon}_t(0_d)$ and $Y^{\epsilon}_t(0_d)$, i.e., both processes $(X^{\epsilon}_t(0_d))_{t\geq 0}$ and $(Y^{\epsilon}_t(0_d))_{t\geq 0}$ have  the same  driving noise $(B_t)_{t\geq 0}$.
\begin{proof}[Lemma~\ref{lemmaC}]
In the sequel, we estimate
$\mathbb{E}\left[\|X^{\epsilon}_t(0_d)-Y^{\epsilon}_t(0_d)\|^2\right]$.
Note that $X^{\epsilon}_0(0_d)=Y^{\epsilon}_0(0_d)=0_d$.
Let $\Delta^{\epsilon}_t(0_d):=X^{\epsilon}_t(0_d)-Y^{\epsilon}_t(0_d)$, $t\geq 0$. Then
\[
\begin{split}
\ud \Delta^{\epsilon}_t(0_d)
&=-\left[F(X^{\epsilon}_t(0_d))-F(Y^{\epsilon}_t(0_d))\right]\ud t
+\left[DF(0_d)Y^{\epsilon}_t(0_d)-F(Y^{\epsilon}_t(0_d))\right]\ud t\\
&\quad\quad \quad+
\sqrt{\epsilon}( \sigma(X^{\epsilon}_t(0_d))
-\sigma(0_d)
)
\ud B_t.
\end{split}
\]
Hence, the It\^o formula reads
\[
\begin{split}
\ud \|\Delta^{\epsilon}_t(0_d)\|^2
&=-2\<\Delta^{\epsilon}_t(0_d),F(X^{\epsilon}_t(0_d))-F(Y^{\epsilon}_t(0_d))\>\ud t\\
&\quad\quad \quad+2\<\Delta^{\epsilon}_t(0_d),DF(0_d)Y^{\epsilon}_t(0_d)-F(Y^{\epsilon}_t(0_d))\>\ud t
\\
&\quad\quad \quad +\epsilon\, \mathrm{Tr}
[
(\sigma(X^{\epsilon}_t(0_d))
-\sigma(0_d))^*( \sigma(X^{\epsilon}_t(0_d))
-\sigma(0_d))
]\ud t \\
&\quad\quad \quad +2\sqrt{\epsilon}
\<\Delta^{\epsilon}_t(0_d),( \sigma(X^{\epsilon}_t(0_d))
-\sigma(0_d)
)
\ud B_t\>.
\end{split}
\]
By~\eqref{eq:lipt} we have 
\begin{equation}\label{eq:trazita}
\mathrm{Tr}[
(\sigma(X^{\epsilon}_t(0_d))
-\sigma(0_d))^*( \sigma(X^{\epsilon}_t(0_d))
-\sigma(0_d)]\leq \ell^2 \|X^{\epsilon}_t(0_d)\|^2.
\end{equation}
A localization argument with the help of~\eqref{H},
the Cauchy--Schwarz inequality
and~\eqref{eq:trazita}
implies
\begin{equation}\label{e:difeq}
\begin{split}
\frac{\ud}{\ud t} 
\mathbb{E}[\|\Delta^{\epsilon}_t&(0_d)\|^2] \leq -2\delta \mathbb{E}[\|\Delta^{\epsilon}_t(0_d)\|^2]\\
&\quad\quad \quad\quad \quad +2
\mathbb{E}[\|\Delta^{\epsilon}_t(0_d)\|\cdot
\|F(Y^{\epsilon}_t(0_d))-DF(0_d)Y^{\epsilon}_t(0_d)\|]+\epsilon \ell^2 \mathbb{E}[\|X^{\epsilon}_t(0_d)\|^2].
\end{split}
\end{equation}
Differential inequality~\eqref{e:difeq} and the Young inequality (for $p=2$) yield
\[
\begin{split}
\frac{\ud}{\ud t} 
\mathbb{E}[\|\Delta^{\epsilon}_t(0_d)\|^2]\leq -\delta \mathbb{E}[\|\Delta^{\epsilon}_t(0_d)\|^2]+\frac{1}{\delta}
\mathbb{E}[\|F(Y^{\epsilon}_t(0_d))-DF(0_d)Y^{\epsilon}_t(0_d)\|^2]+\epsilon \ell^2 \mathbb{E}[\|X^{\epsilon}_t(0_d)\|^2].
\end{split}
\]
By Lemma~\ref{lemmaB} we have
\[
\mathbb{E}[\|X^{\epsilon}_t(0_d)\|^2]\leq 
\frac{ \epsilon C_0}{\delta}\quad \textrm{ for all }\quad t\geq 0,\,\epsilon \in (0,\nicefrac{\delta}{(2\ell^2)}],
\]
where $C_0=2\mathrm{Tr}((\sigma(0_d))^*\sigma(0_d))$.
Since $\Delta^{\epsilon}_t(0_d)=0$, Lemma~\ref{lem:gron} implies
\begin{equation}
\begin{split}
\mathbb{E}[\|\Delta^{\epsilon}_t(0_d)\|^2]&\leq \frac{1}{\delta}e^{-\delta t}\int_{0}^{t}
e^{\delta s}\mathbb{E}[\|F(Y^{\epsilon}_s(0_d))-DF(0_d)Y^{\epsilon}_s(0_d)\|^2]\ud s
+
\frac{ \epsilon^2 \ell^2 C_0}{\delta^2}
\\
& \leq \frac{1}{\delta^2}\sup\limits_{0\leq s\leq t}\mathbb{E}[\|F(Y^{\epsilon}_s(0_d))-DF(0_d)Y^{\epsilon}_s(0_d)\|^2]
+
\frac{ \epsilon^2 \ell^2 C_0}{\delta^2}
\label{eq: es0}
\end{split}
\end{equation}
for all  $t\geq 0$ and $\epsilon \in (0,\nicefrac{\delta}{(2\ell^2)}]$.
Next, we estimate 
\[
\sup\limits_{0\leq s\leq t}\mathbb{E}[\|F(Y^{\epsilon}_s(0_d))-DF(0_d)Y^{\epsilon}_s(0_d)\|^2].
\]
Let $s\in [0,t]$. 
Recall that $F\in \mathcal{C}^2(\mathbb{R}^d,\mathbb{R}^d)$. Since $F(0_d)=0_d$, The mean value theorem yields
\[
\begin{split}
F(Y^{\epsilon}_s(0_d))-F(0_d)
=
\int_{0}^{1} DF(\theta_1 Y^{\epsilon}_s(0_d))\ud \theta_1 \cdot Y^{\epsilon}_s(0_d),
\end{split}
\]
where $DF$ denotes the derivative of $F$.
Since $F(0_d)=0_d$, we have
\begin{equation}\label{e:mena}
\begin{split}
&F(Y^{\epsilon}_s(0_d))-
DF(0_d) Y^{\epsilon}_s(0_d)=
\int_{0}^{1} \left[DF(\theta_1 Y^{\epsilon}_s(0_d))-DF(0_d)\right]\ud \theta_1\cdot Y^{\epsilon}_s(0_d).
\end{split}
\end{equation}
Applying The mean value theorem 
to~\eqref{e:mena} we deduce
\begin{equation}\label{ine6}
\begin{split}
\|F(Y^{\epsilon}_s(0_d))-
DF(0_d)Y^{\epsilon}_s(0_d))\|
\leq C^{\epsilon}_{s} \|Y^{\epsilon}_s(0_d)\|^2,
\end{split}
\end{equation}
where 
\[
C^{\epsilon}_{s}:=\int_{0}^{1}\int_{0}^{1}\|D^2 F(\theta_1\theta_2 Y^{\epsilon}_s(0_d))\| \ud \theta_1 \ud \theta_2
\]
and $D^2 F$ denotes the second order derivative of $F$.
Note that 
\begin{equation}\label{eq:resc}
Y^{\epsilon}_t(0_d)=\sqrt{\epsilon}\, Y_t\quad \textrm{ for any }\quad t\geq 0, 
\end{equation}
where $(Y_t)_{t\geq 0}$ is the unique strong solution of 
\begin{equation}\label{eq:Ynw}
\left\{
\begin{array}{r@{\;=\;}l}
\ud Y_t&-DF(0_d)Y_t\ud t+\sigma(0_d)\ud B_t\quad  
\textrm{  for any }\quad t\geq 0,\\
Y_0 & 0_d.
\end{array}
\right.
\end{equation}
By~\eqref{eq:resc} and~\eqref{C3} we have
\[
\begin{split}
\|D^2 F(\theta_1\theta_2 Y^{\epsilon}_s(0_d))\|=
\|D^2 F(\theta_1\theta_2 \sqrt{\epsilon}Y_s)\|&\leq c_0e^{c_1\theta^2_1\theta^2_2 \epsilon \|Y_s\|^2}.
\end{split}
\]
Since $\theta_1,\theta_2\in [0,1]$, we obtain
\begin{equation}\label{e:D2ine}
\|D^2 F(\theta_1\theta_2 Y^{\epsilon}_s(0_d))\|\leq c_0e^{c_1\epsilon \|Y_s\|^2}.
\end{equation}
Inequality~\eqref{e:D2ine} with the help of  
inequality~\eqref{ine6} and equality~\eqref{eq:resc} yields
\begin{equation}\label{eq:nomet}
\begin{split}
&\|F(Y^{\epsilon}_s(0_d))-
DF(0_d) Y^{\epsilon}_s(0_d)\|^2
\leq c^2_0e^{2c_1\epsilon \|Y_s\|^2}\epsilon^2\|Y_s\|^4
\end{split}
\end{equation}
for any $s\geq 0$, where $(Y_t)_{t\geq 0}$ is the solution 
of~\eqref{eq:Ynw}.
By item i) of Lemma~\ref{A1} in  Appendix~\ref{ap:A} 
it follows that
\begin{equation}\label{eq:notmet1}
\mathbb{E}[\|Y_s\|^8]\leq 24 C^4_*\quad \textrm{ for any }\quad s\geq 0,
\end{equation}
where 
\begin{equation}\label{e:Cstar}
C_*=\frac{\|\sigma(0_d)(\sigma(0_d))^*\|_{\mathrm{F}}\cdot d^2}{\delta}
\end{equation}
and $\|\cdot\|_{\mathrm{F}}$ denotes the Frobenius norm.
Due to~\eqref{eq:elip}, we note that $C_*>0$.
Moreover, by item ii) Lemma~\ref{A1} in  
Appendix~\ref{ap:A} 
for $\epsilon\in (0,\frac{1}{4c_1C_*})$ we have
\begin{equation}\label{eq:nomet2}
\mathbb{E}[e^{4 c_1\epsilon\|Y_s\|^2}]\leq \frac{1}{1-4\epsilon c_1 C_*}\quad \textrm{ for any }\quad s\geq 0.
\end{equation}
Estimate~\eqref{eq:nomet} with the help of
the Cauchy--Schwarz inequality, \eqref{eq:notmet1} 
and~\eqref{eq:nomet2} implies 
\[
\begin{split}
  \mathbb{E}[\|F(Y^{\epsilon}_s(0_d))-
DF(0_d)Y^{\epsilon}_s(0_d)\|^2]\leq \epsilon^2 c^2_0
\left(\mathbb{E}[e^{4 c_1\epsilon\|Y_s\|^2}]
 \mathbb{E}[\|Y_s\|^8]\right)^{\nicefrac{1}{2}} \tilde{C}(\delta,d,c_0)\epsilon^2\left(\frac{1}{1-4\epsilon c_1 C_*} \right)^{\nicefrac{1}{2}}
\end{split}
\]
for any $s\geq 0$,
$\epsilon\in (0,\frac{1}{4c_1C_*})$,
 where $\tilde{C}(\delta, d,c_0)=\sqrt{24}c^2_0C^2_*$ is a positive constant.
Consequently, for $\epsilon\in (0,\frac{1}{4c_1C})$ we obtain
\begin{align}\label{eq:utu}
\sup\limits_{0\leq s\leq t} \mathbb{E}[\|F(Y^{\epsilon}_s(0_d))-
DF(0_d)Y^{\epsilon}_s(0_d)\|^2]
\leq \tilde{C}(\delta,d,c_0)\epsilon^2\left(\frac{1}{1-4\epsilon c_1 C_*} \right)^{\nicefrac{1}{2}}.
\end{align}
Note that if $\epsilon\in(0,\frac{1}{8c_1C_*})$, then 
$(1-4\epsilon c_1 C_*)\geq \nicefrac{1}{2}$.
Let $\epsilon_*:=\min\big\{\frac{1}{8c_1C_*},\frac{ \delta}{2\ell^2}\big\}$.
By~\eqref{eq: es0} and~\eqref{eq:utu} we have for all
$\epsilon\in (0,\epsilon_*]$ and all $t\geq 0$
\[
\mathbb{E}\left[\|X^{\epsilon}_t(0_d)-Y^{\epsilon}_t(0_d)\|^2\right]\leq 
\frac{\sqrt{2}}{\delta^2}\tilde{C}(\delta,d,c_0)\epsilon^2+\frac{ \epsilon^2 \ell^2 C_0}{\delta^2}.
\]
As a consequence, for any  $\epsilon\in (0,\epsilon_*]$ and  $t\geq 0$
we have
\begin{align}\label{eq:contc}
\mathcal{W}_2\left(X^{\epsilon}_t(0_d),Y^{\epsilon}_t(0_d)\right)&\leq 
\frac{\epsilon}{\delta}(\sqrt{2}\tilde{C}(\delta,d,c_0)+\ell^2 C_0 )^{\nicefrac{1}{2}}\leq 
\frac{\epsilon}{\delta}(48c_0 C_*+\ell C^{\nicefrac{1}{2}}_0 ),
\end{align}
where in the last inequality we use
the subadditivity property of the root-map. 
Inequality~\eqref{eq:contc} with the help of~\eqref{e:Cstar} implies the statement.
\end{proof}

\appendix
\section{Tools}\label{ap:A}
In this section, we compute the even moments and exponential moments of the Ornstein--Uhlenbeck process starting at zero.
Let $(Z_t)_{t\geq 0}$ be the unique strong solution of the linear SDE 
\begin{equation}\label{eq:Z990}
\left\{
\begin{array}{r@{\;=\;}l}
\ud Z_t&-UZ_t\ud t+V\ud B_t\quad  
\textrm{  for any }\quad t\geq 0,\\
Z_0 & 0_d,
\end{array}
\right.
\end{equation}
where $U,V\in \mathbb{R}^{d\times d}$ 
are given matrices. The drift matrix $U$ satisfies the following condition:
 there exists a positive $\delta$ such that
\begin{equation}\label{eq:matrix}
\<Ux,x\>\geq \delta \|x\|^2\quad \textrm{ for all }\quad x\in \mathbb{R}^d.
\end{equation}
We recall the definitions and properties of 1-norm $\|\cdot\|_{1}$ and the Frobenius norm $\|\cdot\|_{\mathrm{F}}$. For a given matrix $A=(a^{i,j})_{i,j\in \{1,\ldots,d\}}$  they are given by
\[
\|A\|_{1}:=\sum\limits_{i,j=1}^{d}|a^{i,j}|\quad \textrm{ and } \quad \|A\|_{\mathrm{F}}:=\sqrt{\sum\limits_{j=1}^{d}|a^{i,j}|^2}.
\]

\begin{lemma}[Polynomial and exponential moments]\label{A1}
Assume that~\eqref{eq:matrix} is valid and let $(Z_t)_{t\geq 0}$ be the unique strong solution of the SDE~\eqref{eq:Z990}. Then the following holds.
\begin{itemize}
\item[i)]
For each $j\in \mathbb{N}$ it follows that
\begin{equation}\label{eq:supremum}
\mathbb{E}[\|Z_t\|^{2j}]\leq C^j_* j!\quad \textrm{ for all }\quad t\geq 0, \quad 
\textrm{ where }\quad
C_*:=\frac{\|VV^*\|_{\mathrm{F}}\cdot d^2}{\delta}.
\end{equation}
\item[ii)]
Let $C_*$ be as in item i).
For any $\lambda \in (0,1/C_*)$ and all $t\geq 0$ it follows that
\begin{equation*}
\mathbb{E}[e^{\lambda \|Z_t\|^2}]\leq \frac{1}{1-\lambda C_*}.
\end{equation*}
\end{itemize}
\end{lemma}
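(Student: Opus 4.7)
My plan is to handle item (i) by an Itô-induction argument on $j\in\mathbb{N}$ and then deduce item (ii) from (i) via a power series expansion of the exponential. The argument for (i) is the substantive one; (ii) is a routine consequence.

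For item (i), I apply the Itô formula to $f(z)=\|z\|^{2j}$. A direct computation gives
\[
\nabla f(z)=2j\|z\|^{2(j-1)}z\quad \text{and}\quad D^{2}f(z)=2j\|z\|^{2(j-1)}I_{d}+4j(j-1)\|z\|^{2(j-2)}zz^{*}.
\]
Combining this with \eqref{eq:Z990}, the dissipativity hypothesis \eqref{eq:matrix}, the submultiplicativity \eqref{eq:multipli} (which yields $\<VV^{*}z,z\>\leq \|VV^{*}\|_{\mathrm{F}}\|z\|^{2}$), and the elementary bound $\mathrm{Tr}(VV^{*})\leq d\,\|VV^{*}\|_{\mathrm{F}}$ (since the trace of a $d\times d$ matrix is controlled by $d$ times its Frobenius norm), the drift of $\|Z_t\|^{2j}$ is pointwise bounded by
\[
\Bigl[-2j\delta\|Z_t\|^{2j}+j\bigl(d+2(j-1)\bigr)\|VV^{*}\|_{\mathrm{F}}\,\|Z_t\|^{2(j-1)}\Bigr]\,\ud t.
\]
After a standard localization that removes the Itô martingale part, taking expectations and writing $m_j(t):=\mathbb{E}[\|Z_t\|^{2j}]$ (so that $m_{0}\equiv 1$) produces the moment recursion
\[
\frac{\ud}{\ud t}m_{j}(t)\leq -2j\delta\,m_{j}(t)+j\bigl(d+2(j-1)\bigr)\|VV^{*}\|_{\mathrm{F}}\,m_{j-1}(t),\qquad m_{j}(0)=0.
\]

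I then establish $m_{j}(t)\leq C_{*}^{j}\,j!$ by induction on $j$. The base case $j=0$ is immediate. Assuming the bound at step $j-1$, Lemma~\ref{lem:gron} applied to the recursion above yields
\[
m_{j}(t)\leq \frac{j\bigl(d+2(j-1)\bigr)\|VV^{*}\|_{\mathrm{F}}}{2j\delta}\,C_{*}^{j-1}(j-1)!\,(1-e^{-2j\delta t})\leq \frac{d+2(j-1)}{2jd^{2}}\,C_{*}^{j}\,j!,
\]
where I used $\|VV^{*}\|_{\mathrm{F}}/\delta=C_{*}/d^{2}$ together with the identity $j\cdot(j-1)!=j!$. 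Closing the induction thus reduces to the elementary algebraic inequality $d+2(j-1)\leq 2jd^{2}$, valid for every integer $d\geq 1$ and every $j\geq 1$. This is exactly the point at which the factor $d^{2}$ (rather than $d$) in the definition of $C_{*}$ is consumed.

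For item (ii), by monotone convergence and the Taylor series of the exponential, together with the bound from (i),
\[
\mathbb{E}\bigl[e^{\lambda\|Z_t\|^{2}}\bigr]=\sum_{j=0}^{\infty}\frac{\lambda^{j}}{j!}\,m_{j}(t)\leq \sum_{j=0}^{\infty}(\lambda C_{*})^{j}=\frac{1}{1-\lambda C_{*}},
\]
the geometric sum converging precisely because $\lambda C_{*}<1$, which explains the admissible range of $\lambda$. The main obstacle is the combinatorial bookkeeping in step (i): the coefficient $j(d+2(j-1))$ produced by $D^{2}f$ must feed through the Grönwall step in such a way that the induction closes with the advertised constant $C_{*}=\|VV^{*}\|_{\mathrm{F}}\,d^{2}/\delta$. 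A secondary routine point is the localization that justifies discarding the stochastic integral before taking expectations, which is standard since $\|Z_t\|^{2j}$ grows only polynomially in $\|Z_t\|$ and the drift is strongly dissipative.
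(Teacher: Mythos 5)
Your proof is correct and follows essentially the same strategy as the paper's: It\^o's formula applied to $\|z\|^{2j}$, localization to discard the martingale part, an induction on $j$ closed by Gr\"onwall's inequality (Lemma~\ref{lem:gron}), and a geometric series for item (ii). The only difference is in how the Hessian--trace term is estimated: you split $D^{2}f=2j\|z\|^{2(j-1)}I_{d}+4j(j-1)\|z\|^{2(j-2)}zz^{*}$ and bound $\mathrm{Tr}(VV^{*})$ and $\langle VV^{*}z,z\rangle$ separately (yielding the coefficient $j(d+2(j-1))$), whereas the paper bounds $|\mathrm{Tr}(H(Z_t)VV^{*})|$ through the $1$-norm of the Hessian via \eqref{eq:traza} (yielding $d^{2}j(2j-1)$); your coefficient is slightly sharper, and both close the induction with the same $C_{*}$.
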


\begin{proof}
We start with the proof of item i).
The proof is done by the induction method.
We start the induction basis, $j=1$.
The
It\^o formula yields
\begin{equation}\label{eq:Z32}
\ud \|Z_t\|^2=-2\<Z_t,UZ_t\>\ud t+\mathrm{Tr}[V^*V]\ud t+2\<Z_t,V \ud B_t\>.
\end{equation}
A localization argument in~\eqref{eq:Z32} with the help 
of~\eqref{eq:matrix} implies
\[
\frac{\ud }{\ud t} \mathbb{E}[\|Z_t\|^2]\leq -2\delta \mathbb{E}[\|Z_t\|^2]+\mathrm{Tr}[V^*V].
\]
Since $Z_t=0_d$,
Lemma~\ref{lem:gron} yields 
\begin{equation}\label{eq:Z2}
 \mathbb{E}[\|Z_t\|^2]\leq \frac{\mathrm{Tr}[V^*V]}{2\delta}\quad \textrm{ for all }\quad t\geq 0.
\end{equation}
Note that
\begin{align}\label{eq:hest}
\frac{\mathrm{Tr}[V^*V]}{2\delta}\leq \frac{\|VV^*\|_{1}}{2\delta}\leq 
\frac{d\, \|VV^*\|_{\mathrm{F}}}{2\delta}\leq 
\frac{d^2\, \|VV^*\|_{\mathrm{F}}}{\delta}.
\end{align}
Combining~\eqref{eq:Z2} and~\eqref{eq:hest} we prove the induction basis.

We assume that~\eqref{eq:supremum} holds for $j=n$ and we  prove that it remains valid for $j=n+1$. 
The It\^o formula for the function $f(x)=\|x\|^{2(n+1)}$, $x\in \mathbb{R}$,
reads
\begin{equation}\label{eq:Zn}
\begin{split}
\ud \|Z_t\|^{2(n+1)}&=-2(n+1)\|Z_t\|^{2n}\<Z_t,UZ_t\>\ud t+(\nicefrac{1}{2})\mathrm{Tr}[V^*H(Z_t)V]\ud t\\
&\qquad+2(n+1)\|Z_t\|^{2n}\<Z_t,V \ud B_t\>,
\end{split}
\end{equation}
where the matrix valued function $\mathbb{R}^d\ni x\mapsto H(x):=(H_{i,j}(x))_{i,j\in \{1,\ldots,d\}}\in \mathbb{R}^{d\times d}$ is given by
\begin{equation*}
\begin{split}
H_{i,j}(x):=
\begin{cases}
4(n+1)n \|x\|^{2(n-1)}x^2_i+2(n+1)\|x\|^{2n} &\quad  \textrm{ for } \quad i=j,\\
4(n+1)n \|x\|^{2(n-1)} x_ix_j &\quad \textrm{ for }\quad i\neq j.
\end{cases}
\end{split}
\end{equation*}
By definition of $\|\cdot \|_{1}$  it follows that
\begin{align}\label{eq:H11}
\|H(x)\|_{1}&\leq 2d(n+1)\|x\|^{2n}+
4d(n+1)n \|x\|^{2n}=2d(n+1)(1+2n)\|x\|^{2n}
\end{align}
for all $x\in \mathbb{R}^d$.
Note that $\mathrm{Tr}[V^*H(Z_t)V]=\mathrm{Tr}[H(Z_t)VV^*]$. 
By~\eqref{eq:H11} we obtain
\begin{equation}
\label{eq:trace101}
\begin{split}
|\mathrm{Tr}[H(Z_t)VV^*]|
&\leq 
d \|H(Z_t)\|_{1}\|VV^*\|_{\mathrm{F}}\leq 2d^2(n+1)(1+2n)\|VV^*\|_{\mathrm{F}} \|Z_t\|^{2n}.
\end{split}
\end{equation}
Using a localization argument in~\eqref{eq:Zn}
with the help of~\eqref{eq:matrix} and~\eqref{eq:trace101} yields
\begin{align*}
\frac{\ud}{\ud t} \mathbb{E}[\|Z_t\|^{2(n+1)}]\leq -2(n+1)\delta \mathbb{E}[\|Z_t\|^{2(n+1)}]+
d^2(n+1)(1+2n)\|VV^*\|_{\mathrm{F}}\mathbb{E}[\|Z_t\|^{2n}].
\end{align*}
By induction hypothesis we have 
$
\mathbb{E}[\|Z_t\|^{2n}]\leq C^n_* n!$  for all  $t\geq 0$.
Since $Z_0=0_d$, Lemma~\ref{lem:gron} yields for all $t\geq 0$
\begin{equation*}
\mathbb{E}[\|Z_t\|^{2(n+1)}]\leq \frac{d^2(n+1)(1+2n)\|VV^*\|_{\mathrm{F}}C^n_*\, n!}{2(n+1)\delta} 
\leq C^{n+1}_*(n+1)!,
\end{equation*}
which finishes the induction step.
This concludes the proof of item i).

We continue with the proof of item ii).
By the Monotone Convergence Theorem we have 
\begin{equation*}
\mathbb{E}[e^{\lambda \|Z_t\|^2}]=\sum\limits_{j=0}^{\infty} \frac{\lambda^j \mathbb{E}[\|Z_t\|^{2j}]}{j!}\quad \textrm{ for all } \quad 
\lambda\geq 0.
\end{equation*}
By item i) 
for all $\lambda \in (0,1/C_*)$ and $t\geq 0$ it follows that
\begin{equation*}
\mathbb{E}[e^{\lambda \|Z_t\|^2}]\leq \sum\limits_{j=0}^{\infty} (\lambda C_*)^j=\frac{1}{1-\lambda C_*}.
\end{equation*}
\end{proof}

\begin{lemma}[Covariance]\label{lem:covariance}
Assume that~\eqref{eq:matrix} holds and that the diffusion matrix $V$ is invertible.
Let $(Z_t)_{t\geq 0}$ be the unique strong solution of the 
SDE~\eqref{eq:Z990}
 and let $\Theta_t:=\mathbb{E}[ Z_t Z^*_t]$ for any $t\geq 0$. Then 
\[
\| \Theta_t-\Theta \|_{\mathrm{F}}\leq \|\Theta \|^2_{\mathrm{F}}\, e^{-2\delta t} \quad  \textrm{ for all } \quad t\geq 0,
\]
where  $\Theta\in \mathbb{R}^{d\times d}$
is the unique symmetric and positive definite solution of
the Lyapunov matrix equation
\begin{equation}\label{eq:LEM}
U\Theta+\Theta U^*=VV^*.
\end{equation}
\end{lemma}
\begin{proof}
The proof follows by analogous reasoning used in the proof of Lemma~C.4 in Appendix~C of~\cite{BJ1}. We state it here for completeness of the presentation.

Hypothesis~\eqref{eq:matrix}  and
Theorem~1, p. 443 of~\cite{LANTI}
yield that \eqref{eq:LEM} possesses a unique solution.
By Proposition~3.5 in~\cite{pavliotisbook} we have
\begin{equation}\label{eq:Theta}
\left\{
\begin{array}{r@{\;=\;}l}
\frac{\ud}{ \ud t}\Theta_t & -U\Theta_t-\Theta_t U^*+VV^*\quad \textrm{ for any } \quad t\geq 0,\\
\Theta_0 & 0_{d\times d},
\end{array}
\right.
\end{equation}
where $0_{d\times d}\in \mathbb{R}^{d\times d}$. Let $t\geq 0$ be fixed. Write $r_t:=\|\Theta_t-\Theta\|^2_{\mathrm{F}}$, 
$\Theta_t=(\Theta^{i,j}_t)_{i,j\in \{1,\ldots,d\}}$,
$\Theta=(\Theta^{i,j})_{i,j\in \{1,\ldots,d\}}$, $U=(U^{i,j})_{i,j\in \{1,\ldots,d\}}$ and 
$VV^*=((VV^*)^{i,j})_{i,j\in \{1,\ldots,d\}}$.
By~\eqref{eq:LEM} we obtain
\begin{equation}\label{eq:coord}
\sum_{k=1}^{d}
(U^{i,k}\Theta^{k,j}+\Theta^{i,k}U^{j,k})=(VV^*)^{i,j}\quad \textrm{ for all }\quad i,j\in \{1,\ldots,d\}.
\end{equation}
The differential equation~\eqref{eq:Theta} with the help 
of~\eqref{eq:coord}
reads 
\begin{equation}\label{eq:AB201}
\begin{split}
\frac{\ud }{\ud t}\Theta^{i,j}_t=\sum_{k=1}^{d}
(-
U^{i,k}\Theta^{k,j}_t-\Theta^{i,k}_tU^{j,k}+(VV^*)^{i,j}
)=-\sum_{k=1}^{d}
(
U^{i,k}(\Theta^{k,j}_t-\Theta^{k,j})+(\Theta^{i,k}_t-\Theta^{i,k})U^{j,k}
).
\end{split}
\end{equation}
The chain rule and~\eqref{eq:AB201} imply 
\begin{align*}
\frac{\ud}{\ud t} r_t&=2\sum_{i,j=1}^{d} (\Theta^{i,j}_t-\Theta^{i,j})\frac{\ud }{\ud t}(\Theta^{i,j}_t-\Theta^{i,j})\\
&=-2\sum_{i,j=1}^{d} (\Theta^{i,j}_t-\Theta^{i,j})\sum_{k=1}^{d}
(
U^{i,k}(\Theta^{k,j}_t-\Theta^{k,j})+(\Theta^{i,k}_t-\Theta^{i,k})U^{j,k}
)\\
&=-2\sum_{j=1}^{d}\sum_{i,k=1}^{d} (\Theta^{i,j}_t-\Theta^{i,j})
U^{i,k}(\Theta^{k,j}_t-\Theta^{k,j})-
2\sum_{i=1}^{d}\sum_{j,k=1}^{d}
(\Theta^{i,j}_t-\Theta^{i,j})
U^{j,k}(\Theta^{i,k}_t-\Theta^{i,k}),
\end{align*}
where in the last equality we rearrange the sums.
By~\eqref{eq:matrix} we deduce the following differential inequality
\begin{equation*}
\frac{\ud}{\ud t} r_t\leq 
-4\delta \sum_{i,j=1}^{d}(\Theta^{i,j}_t-\Theta^{i,j})^2=-4\delta  r_t\quad \textrm{ for all }\quad t\geq 0.
\end{equation*}
Lemma~\ref{lem:gron} yields
$r_t\leq r_0e^{-4\delta t}$ for all $t\geq 0$ and consequently  the statement.
\end{proof}

\section*{Acknowledgements}
The research of G. Barrera  has been supported by the Academy of Finland via
the Matter and Materials Profi4 university profiling action. He gratefully acknowledges support from a post-doctorate grant (2020-2023)  held
at the Department of Mathematical and Statistical Sciences at University of Helsinki
 and expresses his gratitude for all the facilities used along with the realization of this work.
The author is grateful to the reviewers
for the thorough examination of the manuscript, which has lead to a significant improvement.


\begin{thebibliography}{99}
\bibitem{Arapostathis}
Arapostathis, A., Biswas, A. \and Borkar, V.: 
{\it Controlled equilibrium selection in stochastically perturbed dynamics,} 
Ann. Probab. 46-5 (2018), pp. 2749--2799. 


\bibitem{AHWANG} 
Athreya, K. \and Hwang, C.:
{\it Gibbs measures asymptotics,} 
Sankhy$\bar{\textrm{a}}$ Ser. A 72-A-1 (2018), pp. 191--207. 

\bibitem{BJ1} 
Barrera, G. \and Jara, M.: 
{\it Thermalisation for small random perturbations of dynamical systems,} 
Ann. Appl. Probab.
30-3 (2020), pp. 1164--1208.

\bibitem{Benner} 
Benner, P., Li, J. \and Penzl, T.:
{\it Numerical solution of large-scale Lyapunov equations, Riccati equations, and linear-quadratic optimal control problems,} 
Numer. Linear Algebra Appl.
15-9 (2008), pp. 755--777.


\bibitem{Bolley} 
Bolley, F., Gentil, I. \and Guillin, A.: 
{\it Convergence to equilibrium in Wasserstein distance for Fokker--Planck equations,} 
J. Funct. Anal.
263-8 (2012), pp. 2430--2457.

\bibitem{Biswas2009} 
Biswas, A. \and Borkar, V.:
{\it Small noise asymptotics for invariant densities for a class of diffusions: a control theoretic view,} 
J. Math. Anal. Appl.
360-2 (2009), pp. 476--484.

\bibitem{COK} 
Coffey, W. \and Kalmykov, Y.:
{\it The Langevin equation: with applications in physics, chemistry and electrical engineering,} Third edition.
World Scientific Series in Contemporary Chemical Physics: Volume 27 (2012).

\bibitem{Dalalyan2017} 
Dalalyan, A.:
{\it Theoretical guarantees for approximate sampling
from smooth and log-concave densities,} 
J. R. Stat. Soc. Ser. B. Stat. Methodol. 79-3 (2017), pp.  651--676.

\bibitem{Day1987} 
Day, M.:
{\it Recent progress on the small parameter exit problem,} 
Stochastics  20-2 (1987), pp.  121--150.

\bibitem{DayDarden1985} 
Day, M. \and Darden, T.:
{\it Some regularity results on the Ventcel--Freidlin
quasi-potential function,} 
Appl. Math. Opt.  13-3 (1985), pp.  259--282.


\bibitem{Duncan2017} 
Duncan, A., N\"usken, N. \and Pavliotis, G.:  
{\it Using perturbed underdamped Langevin dynamics
to efficiently sample from probability distributions,} 
J. Stat. Phys. 169-6 (2017), pp. 1098--1131.

\bibitem{Durmus} 
Durmus, A. \and Moulines, \'E.:  
{\it Quantitative bounds of convergence for geometrically ergodic Markov chain in the Wasserstein distance with application to the Metropolis adjusted Langevin algorithm,}
Stat. Comput. 25-1 (2015), pp. 5--19.


\bibitem{Eberle2019} 
Eberle, A., Guillin, A. \and Zimmer, R.:  
{\it
Couplings and quantitative contraction rates for Langevin dynamics,}
Ann. Probab. 47-4 (2019), pp.  1982--2010.

\bibitem{EberleTAMS2019} 
Eberle, A., Guillin, A. \and Zimmer, R.:  
{\it
Quantitative Harris-type theorems for diffusions and McKean--Vlasov processes,}
Trans. Amer. Math. Soc. 371-10 (2019), pp.  7135--7173.

\bibitem{Freidlin} 
Freidlin, M. \and Wentzell, A.:
{\it Random perturbations of dynamical systems,}
Third edition. Springer Heidelberg (2012). 

\bibitem{Gareth} 
Gareth, R. \and Rosenthal, J.:  
{\it Hitting time and convergence rate bounds for symmetric Langevin diffusions,} 
Methodol. Comput. Appl. Probab.
21-3 (2019), pp. 921--929.

\bibitem{Yingfei} 
Huang, W., Ji, M., Liu, Z. \and Yingfei, Y.: 
{\it Concentration and limit behaviors of stationary measures,} Phys. D 369 (2018), pp. 1--17. 

\bibitem{Hwang1980} 
Hwang, C.: 
{\it Laplace's method revisited: weak convergence of probability measures,} 
Ann. Probab. 8-6 (1980), pp.  1177--1182.

\bibitem{Hwang193} 
Hwang, C., Hwang-Ma, S. \and Sheu, S.:  
{\it Accelerating Gaussian diffusions,} 
Ann. Appl. Probab. 3-3 (1993), pp. 897--913.


\bibitem{YingfeiShen} 
Ji, M., Zhongwei, S. \and Yingfei, Y.: 
{\it Quantitative concentration of stationary measures,} 
Phys. D 399 (2019), pp. 73--85.

\bibitem{Kabanov} 
Kabanov, Y., Liptser, R. \and Shiryaev, A.: 
{\it On the variation distance for probability
measures defined on a filtered space,} 
Probab. Theory Relat. Fields 71-1 (1986), pp. 19--35.

\bibitem{KUL}  
Kulik, A.:
{\it Ergodic behavior of Markov processes with applications to limit theorems,} 
De Gruyter Studies in Mathematics \textbf{67} (2018). 

\bibitem{LANTI} 
Lancaster, P. \and Tismenetsky, M.:  
{\it The theory of matrices,} Second edition. Computer Science and
Applied Mathematics. Academic Press, Inc. (1985).

\bibitem{Levi2013} 
Leli\`evre, T., Nier, F. \and Pavliotis, G.: 
{\it Optimal non-reversible linear drift for the convergence
to equilibrium of a diffusion,} 
J. Stat. Phys. 152-2 (2013), pp.  237--274.

\bibitem{Madras} 
Madras, N. \and Sezer, D.:
{\it Quantitative bounds for Markov chain convergence: Wasserstein and total variation distances,} 
Bernoulli 16-3 (2010), pp. 882--908.

\bibitem{Mao2008} 
Mao, X.:
{\it Stochastic differential equations and applications,} Second edition. Horwood Publishing Limited, Chichester (2008). 

\bibitem{Mikami1988} 
Mikami, T.: 
{\it Asymptotic expansions of the invariant density of a
Markov process with a small parameter,} 
Ann. Inst. H. Poincar\'e Probab. Statist. 24-3 (1988), pp. 403--424.

\bibitem{Mikami} 
Mikami, T.:  
{\it Asymptotic analysis of invariant density of randomly perturbed dynamical systems,} 
Ann. Probab. 18-2 (1990), pp.  524--536.



\bibitem{PZ} 
Panaretos, V. \and Zemel, Y.: 
{\it An invitation to statistics in Wasserstein space,} 
Springer Briefs in Probability and Mathematical Statistics (2020). 

\bibitem{pavliotisbook} 
Pavliotis, G.: 
{\it Stochastic processes and applications diffusion processes, the Fokker--Planck and Langevin
equations,} 
Texts in Applied Mathematics. Springer-Verlag New York (2014).

\bibitem{POPI} 
Pomeau, Y. \and Piasecki, J.:
{\it The Langevin equation,}
C. R. Phys. 18 9-10 (2017), pp. 570--582.



\bibitem{Hodel} 
Scottedward, A., Tenison, B. \and Poolla, K.: 
{\it Numerical solution of the Lyapunov equation by approximate power iteration,}
Linear Algebra Appl. 236-8 (1996), pp. 205--230.

\bibitem{Sheu1986} 
Sheu, S.: 
{\it Asymptotic behavior of the invariant density of a diffusion Markov process with small diffusion,} 
SIAM J. Math. Anal. 17-2 (1986), pp. 451--460.



\bibitem{VI} 
Villani, C.:  
{\it Optimal transport. Old and new,} Springer-Verlag, Volume 338 (2009).

\bibitem{Wu2014} 
Wu, S., Hwang, C. \and Chu, M.: 
{\it Attaining the optimal Gaussian diffusion acceleration,} J. Stat. Phys. 155-3 (2014), pp.  571--590.


\end{thebibliography}
\end{document}